\newtheorem{thm}{Theorem}[section]
\newtheorem{lemma}[thm]{Lemma}
\newtheorem{cor}[thm]{Corollary}
\newtheorem{prop}[thm]{Proposition}
\newtheorem{defi}[thm]{Definition}
\theoremstyle{definition}
\def\CI{{\mathcal{I}}}
\def\CJ{{\mathcal{J}}}
\def\CM{{\mathcal{M}}}
\def\CW{{\mathcal{W}}}
\def\CU{{\mathcal{U}}}
\def\CV{{\mathcal{V}}}
\def\Dim{\operatorname{Dim}\nolimits}
\def\dim{\operatorname{dim}\nolimits}
\def\Rad{\operatorname{Rad}\nolimits}
\def\Soc{\operatorname{Soc}\nolimits}
\def\Hom{\operatorname{Hom}\nolimits}
\def\Dist{\operatorname{Dist}\nolimits}
\title{Endotrivial modules for finite group schemes II}
\author[Jon F. Carlson]{Jon F. Carlson}
\thanks{Research of the first author partially supported by NSF grant
DMS-1001102}
\address{Department of Mathematics, University of Georgia,
Athens, Georgia 30602, USA}
\email{jfc@math.uga.edu}
\author[Daniel K. Nakano]{Daniel K. Nakano}
\thanks{Research of the second author partially supported by NSF grant
DMS-1002135}
\address{Department of Mathematics, University of Georgia,
Athens, Georgia 30602, USA}
\email{nakano@math.uga.edu}
\date\today
\subjclass{20C20}
\begin{document}

\begin{abstract} It is well known that 
if $G$ is a finite group then the group 
of endotrivial modules is finitely generated.  In this paper we prove 
that for an arbitrary finite group scheme $G$, and for any fixed 
integer $n > 0$, there are only finitely many isomorphism classes of endotrivial modules of dimension $n$. This provides 
evidence to support the speculation that the group of 
endotrivial modules for a finite group scheme 
is always finitely generated. The result also has some applications 
to questions about lifting and twisting the structure of endotrivial modules in the case that $G$ is an infinitesimal group scheme
associated to an algebraic group. 
\end{abstract}

\maketitle

\section{Introduction}

Let $S$ be a finite group scheme over a field $k$. 
The endotrivial modules for $S$ 
form an important class of modules 
which, among other things, determine self equivalences 
of the stable category of $S$-modules, modulo projective
$S$-modules. In the case that $G$ is the scheme of a 
finite $p$-group, there is a complete classification 
of endotrivial modules \cite{CT}. This classification 
has been extended to the group algebras of many
other families of finite groups (cf. \cite{CHM, CMN1,CMN2}).

An endotrivial module is an $S$-module $M$ with the property that 
$\Hom_k(M,M) \cong M\otimes M^{*}\cong k \oplus P$ 
(as $S$-modules) where $P$ is a 
projective $S$-module. Two endotrivial modules $M$ and $N$ are 
equivalent if there exist projective modules $P$ and $Q$ 
such that $M \oplus P \cong N \oplus Q$. The equivalence classes of 
endotrivial $S$-modules forms an abelian 
group $T(S)$ under tensor product. It was shown by Puig \cite{Pu}
that this group is finitely generated 
in the case that $S$ is a finite group. 
This fact also follows from the classification. For arbitrary 
finite group schemes, it is an open question 
as to whether $T(S)$ is finitely generated. 

One of the main ingredients in proving that 
$T(S)$ is finitely generated is a demonstration that 
for any fixed non-negative integer 
$n\geq 0$, there are only finitely many endotrivial modules 
of dimension equal to $n$. When $S$ is a 
unipotent group scheme this was proved by the authors 
in \cite[Theorem 3.5]{CN}. In Section 2, we 
extend these earlier results by showing that 
for arbitrary finite group schemes
there are only finitely many endotrivial 
modules for a given dimension. This 
result will be referred to as 
the ``Finiteness Theorem". 

The Finiteness Theorem has some very strong 
connections with the notion of lifting endotrivial modules 
to action of $H$ where $H$ is a group 
scheme containing $S$ as a normal subgroup scheme. In the 
case when $H$ is connected, the Finiteness 
Theorem implies that every endotrivial $S$-module is 
$H$-stable (i.e., the twists of an endotrivial 
module $M^{h}$ are all isomorphic to $M$ 
(as $S$-module) for all $h\in H$). 
Different notions of lifting such as 
``tensor stability'' and ``numerical stability'' 
have been investigated recently by Parshall 
and Scott \cite{PS}. In Section 3, we outline 
these various definitions, and we 
introduce the new concept of 
lifting called ``stably lifting'' which entails 
lifting $S$-modules to $H$-structures in the stable 
module category for $S$. 
We connect our new notion of stable lifting with 
the ideas presented by Parshall and Scott. 

Let $G$ be semisimple algebraic group scheme, 
$B$ a Borel subgroup with unipotent radical 
$U$ defined and split over ${\mathbb F}_{p}$, and 
let $k$ be an algebraically closed field 
of characterstic $p$.  Let $G_{r}$, $B_{r}$, $U_r$ 
denote their $r^{th}$ infinitesimal 
Frobenius kernels. The existence of the Steinberg 
module shows that every projective $G_{r}$-module 
(resp. $B_{r}$, $U_{r}$) lifts to
$G$ (resp. $B$, $U$). Furthermore, if $M$ stably 
lifts then one can use this result to show that 
all syzygies $\Omega^{n}(M)$ stably lift. 
In \cite[Theorem 5.7, Theorem 6.1]{CN}, $T(B_{1})$ 
and $T(U_{1})$ was completely determined for all 
primes. By using this classification, we prove that 
all $B_{1}$ (resp. $U_{1}$) endotrivial modules 
stably lift to $B$ (resp. $U$). We suspect this 
will also hold for $G_{r}$. 
Finally, we exhibit an endotrivial module for 
$B_{1}$, namely $\Omega^{2}(k)$ when the root system 
is of type $A_{2}$ and $p=2$, which does not 
admit a $B$-structure.   

The second author would like to acknowledge 
Shun-Jen Cheng, Weiqiang Wang and their 
conference organizing team for providing financial 
support and for efforts in hosting a first rate workshop 
on representation theory in Taipei during December 2010. 

\section{The finiteness theorem}

We begin by introducing the basic definitions 
which will be used throughout this paper. 
Let $S$ be a finite group scheme. We will 
consider the category $\text{Mod}(S)$ of 
rational $S$-modules and the stable module 
category $\text{StMod}(S)$. Since $S$ 
is a finite group scheme the notion of 
projectivity is equivalent to injectivity. For $M$ 
and $N$ objects in $\text{Mod}(S)$,  
we say that $[M]=[N]$ in $\text{StMod}(S)$ if 
and only if $M\oplus P \cong N\oplus Q$ for some
projective $S$-modules, $P$ and $Q$. 

Suppose that $S$ is any finite group scheme 
defined over $k$. An $S$-module
is an endotrivial module provided that, as $S$-modules,
\[
\Hom_k(M,M) \cong k \oplus P
\]
for some projective $S$-module $P$. 
Equivalently, an $S$-module $M$ is endotrivial 
module if $[M^{*}\otimes M]=[k]$ in $\text{StMod}(S)$.  
Note for any $S$-module $M$ there exists a 
canonical isomorphism $\Hom_k(M,M) \cong M^* \otimes M$. 
We can now define the group $T(S)$ of endotrivial 
$S$-modules as follows. 
The objects in $T(S)$ are the equivalence classes 
$[M]$ in $\text{StMod}(S)$
of endotrivial $S$-modules. The group operation is given
by $[M] + [N] = [M \otimes N]$. The identity 
element is the class $[k]$
and the inverse of $[M]$ is the class $[M^*]$. 
The group $T(S)$ is abelian because 
the associated coproduct used to construct 
the action of $S$ on the tensor product of modules 
is cocommutative.  

In this section, we show that the number of 
endotrivial modules for a finite group scheme 
$H$ having any particular
dimension is finite. The proof follows somewhat the same
lines if that in \cite{CN} and also \cite{Pu}, and is based
on an idea of Dade \cite{D}.

Suppose that $k$ is an algebraically closed field. 
In this section it will be 
more convenient to work with finite-dimensional 
cocommutative Hopf algebras. 
For a finite group scheme $H$, let $A=k[H]^{*}$, 
the group algebra of $H$. 
There is an equivalence of categories 
between $H$-modules and $A$-module. Furthermore, 
the finite dimensional $k$-algebra $A$ 
is a cocommutative Hopf algebra. As a consequence, 
projective $A$-modules are also injective. 
Let $P_1, \dots, P_r$ be a complete set of representatives
of the isomorphism classes of projective indecomposable
$A$-modules. Each $P_i$ has a simple top $S_i = P_i/\Rad(P_i)$ 
and a simple Socle $T_i = \Soc(P_i)$. 
The collection $\{S_1, \dots, S_r\}$
is a complete set of isomorphism classes of simple modules,
as is the set $\{T_1, \dots, T_r\}$. We need not assume here
that $T_i \cong S_i$, though this is often the case.

For each $i = 1, \dots, r$, we assume that $P_i$ is a left
ideal in $A$. That is, we assume that $P _i = Ae_i$ for $e_i$ a
primitive idempotent in $A$. For each $i$ choose a nonzero element 
$u_i \in T_i \subseteq P_i$. Then $u_i$ has the property that $Au_i 
= T_i$. Note that since $u_i \in P_i$, we have that $u_i = u_ie_i$ and
that $u_iP_i \neq \{0\}$.

\begin{lemma}
Let $i$ be an integer between 1 and $r$.
Suppose that $M$ is an $A$-module and that $u_iM \neq \{0\}$. 
Then $M$ has a direct summand isomorphic to $P_i$. Moreover,
if $t_i = \Dim(u_iP_i)$ is the rank of the 
operator of left multiplication
by $u_i$ on $P_i$ and $a_i = \Dim(u_iM)/t_i$, then 
\[ 
M \ \cong \ P_i^{a_i} \oplus M^\prime
\]
where $M^\prime$ has no direct summands isomorphic to $P_i$.
\end{lemma}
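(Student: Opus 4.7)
The plan is to leverage the fact that $A$ is a finite-dimensional cocommutative Hopf algebra, hence Frobenius (self-injective), so each indecomposable projective $P_i$ is also injective with \emph{essential} socle $T_i$. This is the engine that turns the single nonzero evaluation $u_i m \neq 0$ into a split embedding of $P_i$ into $M$.

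For the existence of a $P_i$ summand, I first reduce to the case where the ``witness'' lies in the right component: since $u_i = u_i e_i$, from $u_i M \neq 0$ I pick $m \in M$ with $u_i m \neq 0$, and observe $u_i(e_i m) \neq 0$, so I may assume $m = e_i m$. I then define the $A$-linear map
\[
\phi \colon P_i = A e_i \longrightarrow M, \qquad a e_i \longmapsto a m.
\]
The key point is that $\phi(u_i) = u_i m \neq 0$, so $\Ker \phi$ does not contain $u_i$. Because $A$ is self-injective, $P_i$ is indecomposable injective with simple socle $T_i = A u_i$, and every nonzero submodule of $P_i$ meets $T_i$ nontrivially, hence contains it. Therefore $\Ker \phi = 0$ and $\phi$ embeds $P_i$ into $M$. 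Injectivity of $P_i$ then forces this embedding to split, yielding $M \cong P_i \oplus M_1$.

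For the multiplicity assertion, I apply Krull--Schmidt to write $M \cong P_i^{a} \oplus M'$ with $M'$ having no summand isomorphic to $P_i$, and I compute $\Dim(u_i M)$ via this decomposition. Under any $A$-isomorphism $P_i \xrightarrow{\sim} N$, the image of $u_i P_i$ maps onto $u_i N$, so $\Dim(u_i N) = t_i$; consequently
\[
\Dim(u_i M) \;=\; a \cdot t_i + \Dim(u_i M').
\]
The first half of the lemma, applied to $M'$, says that $u_i M' \neq 0$ would force a $P_i$ summand in $M'$, contradicting the choice of $a$. Hence $u_i M' = 0$, and dividing by $t_i$ gives $a = \Dim(u_i M)/t_i = a_i$, as required.

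I anticipate no serious obstacle: the only substantive input is the essential-socle property of $P_i$, which is immediate from the Frobenius structure on $A$. The remaining assertions are formal consequences of injectivity of $P_i$ and the Krull--Schmidt theorem, and the identification of $u_i$-images across isomorphic summands is just $A$-linearity.
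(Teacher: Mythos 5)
Your proof is correct and follows essentially the same route as the paper: embed $P_i$ into $M$ via $x\mapsto xm$, use the simple essential socle $T_i$ to get injectivity, split off the summand by self-injectivity, and then count $\Dim(u_iM)$ over the decomposition. The only cosmetic difference is that you invoke Krull--Schmidt directly for the multiplicity count where the paper phrases it as an easy induction on $u_iM\cong u_iP_i\oplus u_iN$; the content is the same.
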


\begin{proof}
Let $m \in M$ be an element such that $u_im \neq 0$. Then, 
$u_im = u_ie_im$. Define $\psi: P_i \longrightarrow M$ by 
$\psi(ae_i) = ae_im$ for any $a$ in $A$. This is well defined 
since $P_i = Ae_i$. Now, $\psi(u_i) \neq 0$ and hence 
$\psi(T_i) \neq \{0\}$. Therefore, $\psi$ is injective. Because,
$A$ is a self-injective ring, the image of $\psi$ is a direct 
summand of $M$ and hence $M \cong P_i \oplus N$ for 
some submodule $N$ of $M$. This proves the first statement.

The second statement follows by an easy induction beginning
with the observation that (as vector spaces)
\[
u_iM \ \cong \ u_iP_i \oplus u_iN.
\]
\end{proof}

For a positive integer $n$, let  $\CV_n$ denote 
the variety consisting of all
representations of the algebra $A$ of dimension $n$.
It is defined as follows. Suppose that the collection 
$a_1,  \dots, a_t$ is a chosen set of generators 
of the algebra $A$, so that
every element of $A$ can be written as a polynomial  
in (noncommuting) variables $a_1, \dots, a_t$. We fix this
set of generators for the remainder of the discussion in 
this section. 
Then we have that $A \cong k\langle a_1,
\dots, a_t \rangle/\CJ$ for some ideal $\CJ$. 
A representation of dimension
$n$ of $A$ is a homomorphism 
$\theta: A \longrightarrow M_n(k)$, 
where $M_n(k)$ is the ring of $n \times n$ 
matrices over $k$. The 
representation is completely determined by the assignment to each 
$a_i$ of an $n \times n$ matrix $\theta(a_i) = (a^i_{rs})$. 

For the purposes of defining the variety $\CV_n$, we consider the
polynomial ring $R = k[x_{rs}^i]$ in $tn^2$ (commuting) 
variables with $1 \leq i \leq t$, 
$1 \leq r,s \leq n$, and the assignment 
\[ 
a_i \quad \leftrightarrow \quad (x_{rs}^i) \ \in M_n(R)
\]
for $i = 1, \dots, t$. The ideal $\CJ$ determines an ideal $\CI$ in 
the ring $R$.   That is, any relation $f(a_1, \dots, a_t)$
in $\CJ$, when converted into an expression on the matrices using
the above assignment, defines a collection of relations, 
one for each $r$ and $s$ 
in the elements of $R$. For example, if it 
were the case that $a_1a_2 =0$ in 
$A$, then the polynomial $a_1a_2$ would be an element of $\CJ$, 
and for each $r$ and $s$, the polynomial 
$\sum_{u = 1}^n x_{ru}^1x_{us}^2$ would
be an element of $\CI$.

\begin{lemma}\label{L:closedset1}
Suppose that $M$ is an $A$-module and $P$ is an indecomposable 
projective $A$-module.  Let $s$ be a nonnegative integer.
Let $\CW$ be the subset of $\CV_n$ consisting 
of all representations $\sigma$
of $A$ having the property that $M \otimes L_{\sigma}$ 
has no submodule 
isomorphic $P^s$, where $L_{\sigma}$ is 
the $A$-module affording $\sigma$.
Then $\CW$ is a closed set in $\CV_n$. 
\end{lemma}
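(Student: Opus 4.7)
\textbf{Proof plan for Lemma \ref{L:closedset1}.}

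The plan is to reformulate membership in $\CW$ as a rank condition on a matrix whose entries are polynomial functions of the coordinates $x_{rs}^i$ of $\CV_n$, and then use the standard fact that rank $\leq \ell$ is a closed condition (cut out by the vanishing of all $(\ell{+}1)$-minors).

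First I would reduce to the setting of Lemma 2.1. Since $A$ is self-injective, the indecomposable projective $P$ is also injective, so any submodule of $M\otimes L_\sigma$ isomorphic to $P^s$ is automatically a direct summand. Thus $\sigma \in \CW$ iff $P^s$ is not a direct summand of $M\otimes L_\sigma$. Writing $P = P_i$, with associated socle element $u_i$ and the integer $t_i = \Dim(u_iP_i)$ from the earlier setup, Lemma 2.1 tells us that the multiplicity of $P_i$ in $M\otimes L_\sigma$ is exactly $a_i(\sigma) = \Dim\bigl(u_i(M\otimes L_\sigma)\bigr)/t_i$. Hence
\[
\CW \;=\; \bigl\{\sigma\in\CV_n \;:\; \Dim\bigl(u_i(M\otimes L_\sigma)\bigr) \le (s-1)t_i\bigr\},
\]
with the convention that for $s=0$ the set $\CW$ is empty (hence closed).

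Next I would check that $\Dim\bigl(u_i(M\otimes L_\sigma)\bigr)$ is an upper semicontinuous function of $\sigma$. Fix once and for all a basis of $M$ (of dimension $m$) and use the standard basis of $k^n$ for the underlying space of $L_\sigma$, so that $M\otimes L_\sigma$ has a fixed basis of size $mn$ not depending on $\sigma$. Write $u_i = f(a_1,\dots,a_t)$ as a polynomial in the chosen generators, and use the Hopf algebra comultiplication: expanding $\Delta(u_i)$ as a polynomial in the $\Delta(a_j)$'s yields a finite expression
\[
\Delta(u_i) \;=\; \sum_{\ell} b_\ell \otimes c_\ell,
\]
where each $b_\ell, c_\ell$ is a fixed polynomial in $a_1,\dots,a_t$. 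In the fixed basis of $M\otimes L_\sigma$, left multiplication by $u_i$ is represented by $\sum_\ell [b_\ell|_M] \otimes \sigma(c_\ell)$. Each $[b_\ell|_M]$ is a constant matrix, while the entries of $\sigma(c_\ell)$ are polynomials in the coordinates $x_{rs}^i$ (as $c_\ell$ is a polynomial in the $a_j$, and $\sigma(a_j)$ is the matrix $(x_{rs}^j)$). Therefore the matrix $\Phi(\sigma)$ of left multiplication by $u_i$ on $M\otimes L_\sigma$ has entries in the polynomial ring $R = k[x_{rs}^i]$.

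Finally, the condition $\operatorname{rank}\Phi(\sigma) \le (s-1)t_i$ is exactly the simultaneous vanishing of all $((s-1)t_i + 1)$-minors of $\Phi(\sigma)$. Each such minor is a polynomial in the coordinates of $\CV_n$, so the locus it defines is Zariski closed; intersecting over all minors, $\CW$ is closed in $\CV_n$. The main obstacle, which is really the only nontrivial point, is the bookkeeping of the previous paragraph: one must use cocommutativity/Hopf-algebra structure of $A$ to express the action of $u_i$ on a tensor product, and then see that only $\sigma$-dependent factor carries through polynomially. Once this is in place the rest is standard linear algebra on varieties of matrices.
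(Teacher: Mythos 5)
Your proposal is correct and follows essentially the same route as the paper: convert the ``no submodule $P^s$'' condition into a rank condition on left multiplication by the socle element $u_i$ acting on $M \otimes L_{\sigma}$, observe that this matrix has entries in $R = k[x_{rs}^i]$, and conclude by the vanishing of minors; your extra care with the comultiplication just makes explicit a step the paper glosses. The only cosmetic difference is your threshold $\operatorname{rank} \le (s-1)t_i$ versus the paper's $\operatorname{rank} < st_i$, which agree because Lemma 2.1 forces the rank to be a multiple of $t_i$, as you implicitly use.
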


\begin{proof}
By the previous lemma, there is an element 
$u \in A$ such that, if $t$ is the
rank of the matrix of $u$ acting on $P$, 
then $P^s$ is isomorphic to a submodule
of $M \otimes L_{\sigma}$  if and only if 
the rank of the matrix $\CM_u$ of $u$ on 
$M \otimes L_{\sigma}$  is at least $st$. 
Now we observe that $u$ is a polynomial
in the (noncommuting) generators of $A$ and 
hence the entries of the matrix 
of $u$ on $L_{\sigma}$ are polynomials in 
the entries of the matrices of the generators
of $A$ on $L_{\sigma}$. If we fix a representation 
of $M$, then the entries of 
the matrix of $u$ on $M$ are elements of 
the base field $k$. It follows that the
entries of the matrix $\CM_u$ of $u$ on  
$M \otimes L_{\sigma}$ are all polynomials in 
the variables of the ring $R = k[x_{rs}^i]$. 
Likewise, the determinant of any 
$st \times st$ submatrix of $\CM_u$ is a 
polynomial in the variables of $R$.
As a consequence, the condition that every 
such determinant is zero (which is
the same as saying that the rank of $\CM_u$ 
is less than $st$) defines a closed
set in $\CV_n$. 
\end{proof}

\begin{lemma}\label{L:closedset2} 
Suppose that $M$ is an endotrivial $A$-module 
of dimension $n$. Let $\CW$ be the 
set of all representations $\sigma$ in 
$\CV_n$ such that the module $L_{\sigma}$
afforded by $\sigma$ is not isomorphic to 
$M \otimes \chi$ for any one dimensional
$A$-module $\chi$. Then $\CW$ is a closed set in $\CV_{n}$.
\end{lemma}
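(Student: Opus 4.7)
The plan is to show that the complement of $\CW$ in $\CV_n$ is open. First, since $A$ is a finite-dimensional Hopf algebra over the algebraically closed field $k$, the isomorphism classes of one-dimensional $A$-modules form a finite group under tensor product; list them as $\chi_1, \dots, \chi_m$. Then the complement of $\CW$ decomposes as the finite union
\[
\CV_n \setminus \CW \ = \ \bigcup_{j=1}^m O_j, \qquad O_j := \{\sigma \in \CV_n : L_\sigma \cong M \otimes \chi_j\}.
\]
I would produce, for each $j$, an open set $U_j$ satisfying $O_j \subseteq U_j \subseteq \CV_n \setminus \CW$; this would give $\bigcup_j O_j = \bigcup_j U_j$, which is manifestly open.

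The key ingredient is the endotrivial identity $M^* \otimes M \cong k \oplus P$, where $P$ is projective of dimension $n^2-1$. Tensoring with $\chi_j$ yields $M^* \otimes (M \otimes \chi_j) \cong \chi_j \oplus (P \otimes \chi_j)$, so that $P \otimes \chi_j$ is a distinguished projective summand of $M^* \otimes L_\sigma$ whenever $\sigma \in O_j$. Decomposing $P \otimes \chi_j = \bigoplus_i P_i^{c_{ij}}$ into indecomposable projectives and applying Lemma \ref{L:closedset1} with $M^*$ in place of $M$ and each pair $(P_i, c_{ij})$ in place of $(P, s)$, the set
\[
U_j := \bigl\{\sigma \in \CV_n : M^* \otimes L_\sigma \text{ has } P_i^{c_{ij}} \text{ as a summand for every } i \bigr\}
\]
is a finite intersection of complements of closed sets, hence open. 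The containment $O_j \subseteq U_j$ is immediate from the identity above.

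To close the loop, I would establish $U_j \subseteq \CV_n \setminus \CW$ as follows. For $\sigma \in U_j$, a dimension count forces $M^* \otimes L_\sigma \cong (P \otimes \chi_j) \oplus Y$ with $\dim Y = 1$, so that $Y$ is some one-dimensional $A$-module $\chi$. Tensoring with $M$ and invoking the endotrivial identity a second time gives
\[
L_\sigma \oplus (L_\sigma \otimes P) \ \cong \ M \otimes M^* \otimes L_\sigma \ \cong \ (M \otimes P \otimes \chi_j) \oplus (M \otimes \chi),
\]
and Krull-Schmidt on both sides identifies the non-projective part of $L_\sigma$ with that of $M \otimes \chi$.

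The main obstacle is to upgrade this stable-level matching to an honest $A$-module isomorphism $L_\sigma \cong M \otimes \chi$: both sides have equal dimension, equal non-projective summands, and equal total projective dimension, but $A$ may admit non-isomorphic indecomposable projectives of equal dimension, so the projective parts need not agree a priori. The plan for overcoming this is to analyze the displayed isomorphism summand by summand via Krull-Schmidt, repeatedly applying the endotrivial identity $M^* \otimes M \cong k \oplus P$ to cancel shared structure on both sides, and thereby extract the precise isomorphism class of the projective part of $L_\sigma$, forcing it to agree with that of $M \otimes \chi$. With this in hand, $U_j \subseteq \CV_n \setminus \CW$, so the complement of $\CW$ equals the open set $\bigcup_j U_j$, and $\CW$ is closed.
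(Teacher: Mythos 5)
Your argument is, up to reformulation, the same as the paper's. Where you write the complement of $\CW$ as a finite union of open sets $U_j$ indexed by the one-dimensional modules, the paper writes $\CW$ as the intersection of closed sets $\CU_N$ (one for each one-dimensional module $N$), each obtained from Lemma~\ref{L:closedset1} applied to the projective part of $(N\otimes M)\otimes(N^*\otimes M^*)\cong M\otimes M^*$; your dimension count producing the one-dimensional summand $Y=\chi$ is exactly the paper's production of the one-dimensional module $U$, and your stable identification of $L_\sigma$ with $M\otimes\chi$ corresponds to the paper's closing line ``hence $L_\sigma\cong U\otimes N\otimes M$.''

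The step you flag as the ``main obstacle'' is, as you have left it, a genuine gap, and the sketched remedy does not close it. From $L_\sigma\oplus(L_\sigma\otimes P)\cong(M\otimes P\otimes\chi_j)\oplus(M\otimes\chi)$, Krull--Schmidt matches the non-projective cores, and for the projective parts $Q_1$ of $L_\sigma$ and $Q_2$ of $M\otimes\chi$ it yields only $Q_1\oplus(Q_1\otimes P)\cong Q_2\oplus(Q_2\otimes P)$; nothing you say shows this forces $Q_1\cong Q_2$, and in general a finite group scheme has non-isomorphic projective indecomposables of equal dimension, so ``repeatedly cancelling shared structure'' is not an argument. The efficient fix is the standard fact that an endotrivial module is the direct sum of an indecomposable (non-projective, endotrivial) module and a projective module: when $M$ is indecomposable --- which is the only case invoked in the proof of Theorem~\ref{mainthm}, and the case in which the paper's own one-line conclusion is airtight --- the module $M\otimes\chi$ is indecomposable non-projective of dimension $n$, so a stable isomorphism with the $n$-dimensional module $L_\sigma$ leaves no room for a projective complement and gives $L_\sigma\cong M\otimes\chi$ outright, i.e.\ $U_j\subseteq\CV_n\setminus\CW$. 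If you insist on the lemma for a decomposable endotrivial $M$ exactly as stated, you must actually pin down $Q_1$ (for instance by showing $Q\mapsto Q\oplus(Q\otimes P)$ is injective on projectives of fixed dimension, which you have not done); note that the paper's proof is equally brisk at precisely this point, and its application only requires the indecomposable case.
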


\begin{proof}
Suppose that $N$ is a one dimensional $A$-module.
Because $M$ is endotrivial, we have that 
$N\otimes M$ is endotrivial. Thus 
\[
(N \otimes M) \otimes (N^* \otimes M^*) \cong k \oplus 
\sum_{i=1}^r P_i^{n_i}
\]
where $P_1, \dots, P_r$ are the indecomposable 
projective $A$-modules and $n_1, \dots, n_r$ 
are nonnegative integers. 
For each $i$, let $\CW_i$ be the set of all 
$\sigma \in \CV_n$ with the property
that $L_{\sigma} \otimes N^* \otimes M^*$ 
does not contain a submodule isomorphic to 
$P_i^{n_i}$.  The sets $\CW_i$ are closed 
by Lemma~\ref{L:closedset1}. Hence, 
the set $\CU_N = \CW_1 \cup \dots \cup \CW_r$ 
is also closed. 
If $\sigma$ is not in $\CU_N$, then 
\[
L_{\sigma} \otimes N^* \otimes M^* \quad \cong \quad U \ \oplus \ 
\sum_{i=1}^r P_i^{n_i}
\]
for some $A$-module $U$. 
But the dimension of $U$ must be one because 
$\dim L_{\sigma}\otimes N^{*}\otimes M^{*}=\dim (N\otimes M)\otimes (N^{*}\otimes M^{*})$. Therefore, 
\[
L_{\sigma} \otimes N^* \otimes M^* \otimes U^*\quad \cong 
\quad k \ \oplus \ \sum_{i=1}^r U^* \otimes P_i^{n_i}
\]
and hence, $L_{\sigma} \cong U \otimes N \otimes M$. Now we claim that 
$\CW = \cap_N \CU_N$ where $N$ runs through the one dimensional 
$A$-modules. So $\CW$ is closed. 
\end{proof}

At this point we are ready to prove our main theorem.

\begin{thm} \label{mainthm}
For any natural number $n$, there is only 
a finite number of isomorphism classes of 
endotrivial modules of dimension $n$. 
\end{thm}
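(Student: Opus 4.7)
The plan is to combine Lemma~\ref{L:closedset2} with the noetherianity of the affine variety $\CV_{n}$, arguing by contradiction: suppose there were infinitely many isomorphism classes of endotrivial modules of dimension $n$.

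The first step is to observe that there are only finitely many one-dimensional $A$-modules. Indeed, since $A$ is a finite-dimensional algebra, there are at most $\dim(A/\Rad(A))$ isomorphism classes of simple $A$-modules, and hence only finitely many one-dimensional ones. Consequently the operation of twisting an $A$-module by a one-dimensional module partitions the set of isomorphism classes of endotrivial modules of dimension $n$ into orbits of bounded finite size. Under the contradictory hypothesis there must therefore exist infinitely many such orbits, and so I can select an infinite sequence $M_{1},M_{2},M_{3},\dots$ of endotrivial $A$-modules of dimension $n$ representing pairwise distinct twist orbits, that is, $M_{j}\not\cong M_{i}\otimes\chi$ for all $i\neq j$ and all one-dimensional $A$-modules $\chi$.

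Next, for each $j$ I apply Lemma~\ref{L:closedset2} to $M_{j}$ to obtain a closed subset $\CW_{j}\subseteq\CV_{n}$ consisting of those $\sigma$ for which $L_{\sigma}\not\cong M_{j}\otimes\chi$ for any one-dimensional $\chi$. Fixing for each $j$ a representation $\sigma_{j}\in\CV_{n}$ affording $M_{j}$, I have $\sigma_{j}\notin\CW_{j}$, while $\sigma_{j}\in\CW_{i}$ for every $i\neq j$, the latter precisely because $M_{j}$ lies in a different twist orbit from $M_{i}$.

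The final step is to invoke noetherianity: $\CV_{n}$ is an affine variety, so the descending chain of closed subsets
\[
\CW_{1}\ \supseteq\ \CW_{1}\cap\CW_{2}\ \supseteq\ \CW_{1}\cap\CW_{2}\cap\CW_{3}\ \supseteq\ \cdots
\]
must stabilize at some finite stage $N$. However, $\sigma_{N+1}$ lies in $\CW_{1}\cap\cdots\cap\CW_{N}$ but not in $\CW_{N+1}$, so the chain drops strictly at step $N+1$, giving the desired contradiction. The step that deserves care is the very first one—ensuring that infinitely many isomorphism classes do indeed yield infinitely many twist orbits—which is exactly where the finiteness of the group of one-dimensional $A$-modules is needed; the remainder of the argument is a clean application of Lemma~\ref{L:closedset2} combined with the noetherian descending chain condition on $\CV_{n}$.
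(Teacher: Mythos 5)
Your proof is correct and follows essentially the same route as the paper: Lemma~\ref{L:closedset2} combined with the finiteness of the set of one-dimensional $A$-modules, plus the finiteness properties of the noetherian variety $\CV_n$. The only (cosmetic) difference is the final step: the paper observes that each twist orbit gives an open set whose closure is a union of the finitely many components of $\CV_n$, whereas you run a strictly descending chain of the closed sets $\CW_j$ and invoke the descending chain condition --- both are immediate consequences of $\CV_n$ being a noetherian topological space.
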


\begin{proof}
Suppose that $M$ is an indecomposable endotrivial
module of dimension $n$. Let $\CU$ be 
the subset of $\CV_n$ consisting of 
representations $\sigma$ with the property that 
the underlying module $L_{\sigma}$ is 
isomorphic to $N \otimes M$ for some 
$A$-module $N$ of dimension one. Note 
that $A$ has only finitely many isomorphism
classes of dimension one, and hence 
there are only finitely many isomorphism classes
of modules represented in $\CU$.

By Lemma~\ref{L:closedset2}, $\CU$ is an open set 
in $\CV_n$. Hence, $\overline{\CU}$, the
closure of $\CU$ is a union of components 
in $\CV_n$. Therefore, the theorem is 
proved with the observation that $\CV_n$ 
has only finitely many components. 
\end{proof}

\section{Liftings and Stability} 

Let $S$ be a finite group scheme which is 
a normal subgroup scheme in a group scheme $H$. 
In this section we will describe different 
notions of when an $S$-module has a structure that 
extends to $H$. 

We say that an $S$-module $M$ {\em lifts} 
to $H$ if $M$ has an $H$-module structure whose 
restriction to $S$ agrees with the (original) 
$S$-module structure. This is the strongest form 
of ``lifting''. The weakest form of lifting 
is the notion of $H$-stable. Let $M$ be a $S$-module. 
For $h\in H$, one can consider the 
twisted module $M^{h}$ which is a 
$S^{\prime}=h^{-1}Sh$-module (cf. \cite[I. 2.15]{Jan}).  
In particular if $h$ normalizes $S$ 
then the twisted module $M^{h}$ is an $S$-module. 
An $S$-module $M$ is called {\em $H$-stable} 
if and only if $M^{h}\cong M$. 
If the $S$-module $M$ lifts to $H$-module, then 
$M$ is $H$-stable. The converse statement is 
not true as we will see in Section 6 (cf. \cite[4.2.1]{PS}). 

Following \cite[2.2.2, 2.2.3]{PS} we recall 
the notions of numerical and tensor stablity 
defined by Parshall and Scott. 

\begin{defi} An $S$-module $M$ is 
{\em numerically $H$-stable} if there exists 
an $H$-module $Z$ such that $Z|_{S}\cong M^{\oplus n}$. 
\end{defi} 

\begin{defi} An $S$-module $M$ is {\em tensor $H$-stable} 
if there exists a finite-dimensional $H/S$-module $Y$ such that 
$M\otimes Y$ is an $H$-module whose restriction 
to $S$ coincides with $M\otimes (Y|_{S})$. 
\end{defi} 

Tensor $H$-stability is equivalent to numerically $H$-stable (cf. \cite[2.2.3]{PS}). It is clear that if $M$ lifts to $H$ then 
$M$ is tensor $H$-stable and numerically $H$-stable. 
Furthermore, tensor $H$-stable and numerically 
$H$-stable imply $H$-stable. 

Next we introduce a new concept of lifting which 
will be relevant for our study of endotrivial modules. 

\begin{defi} An $S$-module $M$ {\em stably lifts to $H$} 
if there exists an $H$-module $K$ such that 
$K|_{S}\cong M\oplus P$  where $P$ is a projective $S$-module. 
\end{defi} 

Observe that in the definition of stable lifting 
to $H$, the $S$-modules $K$ and $M$ 
represent the same object in $\text{StMod}(S)$, 
the stable category of all 
$S$-modules. 
If $M$ lifts $H$ then $M$ stably lifts to $H$. 
Also, if $M$ is non-projective as an $S$-module 
and stably lifts to $H$ then by using the 
Krull-Schmidt theorem and the fact that twists 
of projective $S$-modules are projective, it 
follows that $M$ is $H$-stable. 
 
\section{Applications} 

Suppose that  $G$ is  a semisimple, simply connected 
algebraic group, defined and split over the finite 
field ${\mathbb F}_p$ with $p$ elements for a
prime $p$. Let $k$ be the algebraic closure of ${\mathbb F}_p$.  Let
$\Phi$ be a root system associated to $G$ with respect to a maximal
split torus $T$. Let $\Phi^{+}$ (resp. $\Phi^{-}$) be the set of positive
(resp. negative) roots and $\Delta$ be a base consisting of simple roots. 
Let $B$ be a Borel subgroup containing $T$ corresponding 
to the negative roots and let $U$ denote the unipotent radical of $B$. 
More generally, if $J\subset \Delta$, let $L_{J}$ be the Levi subgroup generated 
by the root subgroups with roots in $\Delta$, $P_{J}$ the associated (negative) parabolic 
subgroup and $U_{J}$ its unipotent radical such that $P_{J}=L_{J}\ltimes U_{J}$. 

Let $H$ be an affine algebraic group scheme 
over $k$ and let $H_{r}=\text{ker }F^{r}$. 
Here $F:H\rightarrow H^{(1)}$ is the Frobenius map
and $F^{r}$ is the $r^{th}$  iteration of the Frobenius map.
We note that there is a categorical equivalence between 
modules for the restricted $p$-Lie algebra
$\text{Lie}(H)$ of $H$  and $H_{1}$-modules.
For each value of $r$, the group algebra 
$kH_r$ is the distribution 
algebra $\Dist(H_r)$ (cf. \cite{Jan}). In 
general, for the rest of this 
paper, we use $\Dist(H_{r})$ to denote 
the group algebra of $H_r$.

For any group scheme $H$, let $\text{mod}(H)$ 
be the category of
finite dimensional rational $H$-modules. This 
construction can be applied 
when $H=G$, $B$, $P_{J}$, $L_{J}$, $U$, $U_{J}$ 
and $T$. Note that the use 
of $T$ (maximal torus) and $T(-)$ 
(endotrivial group) will be clear 
from the context. Let $X:=X(T)$ be 
the integral weight lattice obtained from $\Phi$.
The set $X$ has a partial ordering: 
if $\lambda,\mu\in X$, then
$\lambda\geq \mu$ if and only if 
$\lambda - \mu\in \sum_{\alpha\in
\Pi}\mathbb{N}\alpha$.  

Let $\alpha^{\vee}=2\alpha/\langle\alpha,\alpha\rangle$ 
is the coroot
corresponding to $\alpha\in \Phi$. The set 
of dominant integral weights is defined by
$$
X_{+}:=X(T)_{+}=\{\lambda\in X(T):
\ 0\leq \langle\lambda,\alpha^{\vee}\rangle\
\text{for all $\alpha \in \Delta$} \}.
$$
Furthermore, the set of $p^{r}$-restricted 
weights is
$$
X_{r}(T)=\{\lambda\in X:\
0\leq\langle\lambda,\alpha^{\vee}\rangle<p^{r}\,\,
\text{for all $\alpha\in
\Delta$}\}.
$$
Let $X(T_{r})$ be the set of characters 
of $T_{r}$ which can 
be identified with the set of one 
dimensional simple modules for $T_{r}$. 

For a reductive algebraic group $G$, the 
simple modules are labelled $L(\lambda)$ 
and the induced modules are 
$H^{0}(\lambda)=\text{ind}_{B}^{G} \lambda$, 
where $\lambda\in X(T)_{+}$. 
The Weyl module $V(\lambda)$ is defined as 
$V(\lambda)=H^{0}(-w_{0}\lambda)^{*}$. Let $T(\lambda)$ be 
the indecomposable tilting module with 
highest weight $\lambda$. 

We can now apply the Finiteness Theorem 
to demonstrate, under mild assumptions on $H$, that 
every endotrivial module is $H$-stable. 
In the following sections we show
that the problem of lifting of endotrivial 
modules is rather subtle. 

\begin{thm} Let $H$ be a connected affine algebraic 
group scheme and $S$ be 
a finite group scheme which is a normal 
subgroup scheme of $H$. If $M$ is an 
endotrivial $S$-module then $M$ 
is $H$-stable.  
\end{thm}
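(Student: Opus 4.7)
The plan is to exhibit the stabilizer
\[
H_M \;=\; \{\, h \in H \mid M^h \cong M \text{ as an $S$-module}\,\}
\]
as a closed subgroup of finite index in $H$; connectedness of $H$ will then force $H_M = H$, giving the desired $H$-stability. That $H_M$ is a subgroup of $H$ is straightforward from the composition law for twists.

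First, $[H:H_M]$ is finite. For any $h \in H$, the twist $M^h$ is again an $S$-module (since $S$ is normal in $H$), and it is endotrivial of the same dimension as $M$, because twisting commutes with $\otimes$ and duality and sends projective $S$-modules to projective $S$-modules. The orbit $\{[M^h] : h \in H\}$ is in bijection with $H/H_M$ and is contained in the set of isomorphism classes of endotrivial $S$-modules of dimension $\dim M$, which is finite by the Finiteness Theorem (Theorem~\ref{mainthm}). Hence $[H:H_M] < \infty$.

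The main work is to show that $H_M$ is closed in $H$. Let $\rho \colon kS \to \End_k(M)$ be the representation affording $M$, and write $c_h$ for the conjugation action of $h \in H$ on $kS$; the assignment $(h,s) \mapsto c_h(s)$ is algebraic. Consider the subscheme
\[
\tilde{H}_M \;=\; \bigl\{\,(h,g) \in H \times GL(M) \mid g\,\rho(s)\,g^{-1} = \rho(c_h(s)) \text{ for all } s \in kS\,\bigr\},
\]
whose defining relations are polynomial in $(h,g)$, so $\tilde{H}_M$ is closed in $H \times GL(M)$. One checks directly that $\tilde{H}_M$ is a subgroup scheme of $H \times GL(M)$ under the product group law, and that the first projection $\tilde{H}_M \to H$ is a homomorphism of affine algebraic group schemes with image exactly $H_M$. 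Since the image of a homomorphism of affine algebraic group schemes is always a closed subgroup scheme, $H_M$ is closed in $H$.

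Finally, a closed subgroup of finite index in a connected algebraic group must equal the whole group: the cosets of $H_M$ partition $H$ into finitely many closed translates, each of which is then clopen, so connectedness of $H$ leaves only one coset. Thus $H_M = H$, and $M^h \cong M$ for every $h \in H$, i.e.\ $M$ is $H$-stable. The principal obstacle is the closedness step, because the projection of a closed subset is in general only constructible; passing through the auxiliary subgroup scheme $\tilde{H}_M$ is what allows one to invoke the theorem that homomorphic images of affine algebraic group schemes are always closed.
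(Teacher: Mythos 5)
Your proposal is correct and takes essentially the same route as the paper: you form the stabilizer $A=\{h\in H:\ M^{h}\cong M\}$, use the Finiteness Theorem to see it has finite index, and conclude $A=H$ from connectedness. The only difference is that you spell out why the stabilizer is closed (via the transporter subgroup of $H\times GL(M)$ and the fact that homomorphic images of algebraic group schemes are closed), a point the paper asserts without proof, so your argument is a slightly more detailed version of the same proof.
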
 

\begin{proof} Consider the closed subgroup 
$A=\{h\in H:\ M^{h}\cong M\}$ in $H$. According to 
the finiteness theorem, this must have finite 
index in $H$ because there are only finitely many 
endotrivial $S$-modules of any fixed dimension. 
Therefore, $A$ must 
contain the connected component 
of $H$. Because $H$ is connected, we have 
that $A=H$, which proves the theorem. 
\end{proof} 

\begin{cor} \label{C:twistiso} Let $H=G$, 
$B$, $P_{J}$, $L_{J}$, $U$, $U_{J}$ and $T$ as above, and 
$H_{r}$ be the $r$th Frobenius kernel. 
If $M$ is an endotrivial $H_{r}$-module, 
then $M$ is $H$-stable. 
\end{cor}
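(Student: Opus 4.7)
The plan is to deduce this corollary directly from the preceding theorem by verifying its two hypotheses in each of the listed cases. The theorem asserts that whenever $S$ is a normal finite subgroup scheme of a connected affine algebraic group scheme $H$, every endotrivial $S$-module is automatically $H$-stable. So the task reduces to checking (i) that $H_r$ is a normal finite subgroup scheme of $H$, and (ii) that $H$ is a connected affine algebraic group scheme, for each choice of $H$ in the list $G, B, P_J, L_J, U, U_J, T$.

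For (i), I would invoke the fact that $H_r = \ker F^r$ is the scheme-theoretic kernel of the $r$th iterate of the Frobenius endomorphism $F^r : H \to H^{(r)}$. As the kernel of a group scheme homomorphism it is automatically a normal closed subgroup scheme of $H$, and it is finite because $H$ is a smooth algebraic group and the Frobenius morphism is finite and flat. For (ii), I would note that each of $G, B, P_J, L_J, U, U_J, T$ is a standard connected closed subgroup scheme of the semisimple, simply connected algebraic group $G$: $T$ is a (connected) maximal torus, $U$ and the $U_J$ are unipotent, $B$ and the $P_J$ are Borel/parabolic subgroups (whose connectedness is standard), and $L_J$ is a connected reductive Levi factor. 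All of them are affine algebraic group schemes over $k$.

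With (i) and (ii) verified, for any such $H$ and any endotrivial $H_r$-module $M$, the hypotheses of the theorem are satisfied, so $M$ is $H$-stable, which is exactly the conclusion.

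I do not expect any genuine obstacle here; this corollary is essentially a checklist application of the theorem, and the only point that needs to be made explicit is the identification of $H_r$ as a normal subgroup scheme via its description as the kernel of a Frobenius power. If anything, the mildly delicate step is keeping track of the conventions: in the setup of Section 4, $B$ is the Borel corresponding to the negative roots and $U$ is its unipotent radical, so one should make clear that the corollary applies to this choice (and to its opposite, by symmetry), so that the various $H_r$ one encounters elsewhere in the paper all fall under the statement.
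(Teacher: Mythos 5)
Your proposal is correct and matches the paper's (implicit) argument: the corollary is stated without proof precisely because it follows by applying the preceding theorem with $S=H_r$, once one notes that $H_r=\ker F^r$ is a finite normal subgroup scheme and that each of $G$, $B$, $P_J$, $L_J$, $U$, $U_J$, $T$ is a connected affine algebraic group scheme. Your verification of these two hypotheses is exactly the routine checking the paper leaves to the reader.
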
 

Another application of the Finiteness Theorem 
involves proving that the restriction of an 
endotrivial $G_{r}$-modules 
to conjugate unipotent radicals of Borel subgroups 
produces syzygies of the same degree. 

\begin{thm} Let $U$ and $U^{\prime}$ be the 
unipotent radicals of the Borel subgroups 
$B$ and $B^{\prime}$. If $M$ is an endotrivial 
$G_{r}$-module with 
$$
M|_{U_{r}}\cong \Omega^{n_{1}}_{U_{r}}(k)\oplus (proj)
$$
and 
$$
M|_{U^{\prime}_{r}}\cong 
\Omega^{n_{2}}_{U^{\prime}_{r}}(k)\oplus (proj)
$$
then $n_{1}=n_{2}$. 
\end{thm}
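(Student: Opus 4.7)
The plan is to combine Corollary \ref{C:twistiso} (applied with $H=G$, which gives that $M$ is $G$-stable) with the conjugacy of Borel subgroups over the algebraically closed field $k$. The guiding idea is that conjugation by an element $g\in G(k)$ carrying $B$ to $B'$ intertwines the two restrictions $M|_{U_r}$ and $M|_{U'_r}$ and transports the $n$th syzygy of $k$ over $U_r$ to the $n$th syzygy of $k$ over $U'_r$, so the two exponents must agree.

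First I would choose $g \in G(k)$ with $gBg^{-1}=B'$; such a $g$ exists because any two Borel subgroups of a connected reductive group are $G(k)$-conjugate over an algebraically closed field. Then conjugation $c_g : U \to U'$, $x \mapsto gxg^{-1}$, restricts to an isomorphism of infinitesimal group schemes $c_g : U_r \to U'_r$. Corollary \ref{C:twistiso} yields $M^g \cong M$ as $G_r$-modules. Restricting this isomorphism to $U_r$ and unwinding the definition of the twist (for $u \in U_r$ the $M^g$-action of $u$ is the $M$-action of $gug^{-1}\in U'_r$) gives
\[
M|_{U_r} \;\cong\; c_g^{\,*}\bigl(M|_{U'_r}\bigr)
\]
as $U_r$-modules, where $c_g^{\,*}$ denotes pullback of representations along the isomorphism $c_g : U_r \to U'_r$.

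Next I would observe that $c_g^{\,*}$ is an equivalence of module categories that fixes the trivial module and sends projective modules to projective modules. Consequently it transports a minimal projective resolution of $k$ over $U'_r$ to a minimal projective resolution of $k$ over $U_r$, so $c_g^{\,*}\bigl(\Omega^{n}_{U'_r}(k)\bigr) \cong \Omega^{n}_{U_r}(k)$ modulo projective summands. Substituting the hypothesis $M|_{U'_r}\cong\Omega^{n_2}_{U'_r}(k)\oplus(\text{proj})$ into the displayed identity and comparing with $M|_{U_r}\cong\Omega^{n_1}_{U_r}(k)\oplus(\text{proj})$ produces
\[
\Omega^{n_1}_{U_r}(k) \oplus (\text{proj}) \;\cong\; \Omega^{n_2}_{U_r}(k) \oplus (\text{proj})
\]
as $U_r$-modules, i.e. $[\Omega^{n_1}(k)]=[\Omega^{n_2}(k)]$ in $\text{StMod}(U_r)$. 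Since $U_r$ is a nontrivial infinitesimal unipotent group scheme, the trivial module has infinite projective dimension and the classes $[\Omega^n(k)]$ are pairwise distinct (they are shifts of $[k]$ in the triangulated stable category), so $n_1=n_2$.

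The one piece of bookkeeping that will require the most care is the naturality identity $M|_{U_r}\cong c_g^{\,*}(M|_{U'_r})$: it rests on writing down the twist of a $G_r$-module and then identifying the induced $U_r$-action with the pullback along $c_g:U_r\to U'_r$. Once this formal point is in place, the remainder of the argument is immediate.
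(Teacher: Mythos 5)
Your proposal is correct and takes essentially the same route as the paper: the paper also invokes Corollary~\ref{C:twistiso} to get $M\cong M^{w}$ as $G_{r}$-modules (conjugating by a Weyl group representative $w$ rather than a general $g\in G(k)$, an immaterial difference) and then identifies $M|_{U_{r}}$ with the transported $M|_{U'_{r}}$ to compare the syzygies. You have merely spelled out the transport-of-structure bookkeeping and the final stable-category step ($[\Omega^{n_1}(k)]=[\Omega^{n_2}(k)]$ forces $n_1=n_2$) that the paper leaves implicit.
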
 

\begin{proof} The Borel subgroups $B$ 
and $B^{\prime}$ are conjugate by some $w\in W$, and 
$B^{\prime}=B^{w}=w(B)w^{-1}$. According 
to Corollary~\ref{C:twistiso}, we have 
$M\cong M^{w}$ as $G_{r}$-modules. Under 
this isomorphism $U_{r}$ is isomorphic to $U_{r}^{\prime}$ and 
$M|_{U_{r}}$ can be identified with 
$M|_{U^{\prime}_{r}}=M^{w}$. The result 
now follows by applying these isomorphisms. 
\end{proof} 

\section{Lifting Endotrivial Modules}

Let $M$ be an $S$-module and $\Omega^{n}_{S}(M)$ 
($n=0,1,2,\dots $) be the $n^{th}$ syzygy of $M$ obtained by 
taking a projective resolution of $M$. We 
will assume that $\Omega^{n}_{S}(M)$ has 
no projective $S$-summands. 
By taking an injective resolution of $M$ 
we can define $\Omega^{n}_{S}(M)$ 
for $n$ negative. Note that if $M$ 
is endotrivial over $S$ then $\Omega_{S}^{n}(M)$ 
is an endotrivial module 
for all $n\in {\mathbb Z}$. The following result 
provides conditions on when there are stable 
liftings for the syzygies of an $H$-module $M$ 
(when considered as an $S$-module). 

\begin{prop} \label{P:stableiso} Let $S$ be 
a finite group scheme which is a normal 
subgroup scheme of $H$. Suppose there exists 
a projective $S$-module $P$ which lifts 
to an $H$-module. Furthermore, suppose that there exists 
a surjective $H$-map $P\rightarrow k$.  If $M$ is a 
finite-dimensional $H$-module then 
$\Omega^{n}_{S}(M)$ stably lifts to $H$ for each 
$n\in {\mathbb Z}$ . 
\end{prop}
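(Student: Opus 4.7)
The plan is to build a stable lift of the first syzygy $\Omega_S(M)$ from a single short exact sequence induced by the given $H$-equivariant surjection $P \to k$, and then iterate. First, I would let $K$ denote the kernel, producing a short exact sequence of $H$-modules
$$0 \to K \to P \to k \to 0.$$
Tensoring over $k$ with the $H$-module $M$ (diagonal action) yields another short exact sequence of $H$-modules
$$0 \to K \otimes M \to P \otimes M \to M \to 0.$$
The crucial observation is that $P \otimes M$ remains projective as an $S$-module: since $A = k[S]^*$ is a finite-dimensional Hopf algebra, $A \otimes N$ is free over $A$ for any $k$-vector space $N$, so tensoring a projective $S$-module with any $S$-module gives a projective. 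Because the middle term is $S$-projective, this sequence exhibits $K \otimes M$ as a representative of $\Omega_S(M)$ in $\text{StMod}(S)$; that is, $(K \otimes M)|_S \cong \Omega_S(M) \oplus (\text{proj})$, which is precisely the definition of $\Omega_S(M)$ stably lifting to $H$.

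For higher positive $n$, I would iterate. Since $K \otimes M$ is itself an $H$-module, applying the same construction to it shows that $K^{\otimes 2} \otimes M$ is an $H$-module whose restriction to $S$ is $\Omega_S(K \otimes M) \oplus (\text{proj})$; because $\Omega_S$ is well defined on $\text{StMod}(S)$ and kills projective summands, this coincides with $\Omega^2_S(M) \oplus (\text{proj})$. A straightforward induction then shows that $K^{\otimes n} \otimes M$ stably lifts $\Omega^n_S(M)$ for every $n \geq 0$.

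For negative $n$, I would dualize. Since $A$ is self-injective, the functor $\Hom_k(-,k)$ sends the projective (equivalently, injective) $H$-module $P$ to the projective $S$-module $P^*$, and the dualized sequence
$$0 \to k \to P^* \to K^* \to 0$$
is still an exact sequence of $H$-modules with $S$-projective middle term. Tensoring with $M$ gives an $H$-module $K^* \otimes M$ whose restriction to $S$ is $\Omega^{-1}_S(M) \oplus (\text{proj})$, and iterating $n$ times produces an $H$-module that stably lifts $\Omega^{-n}_S(M)$.

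The only point worth flagging as a potential obstacle is the projectivity of $P \otimes M$ over $S$: without it, the middle term of the tensored sequence would not be $S$-projective and the kernel would no longer serve as a model for the syzygy. Once this standard Hopf-algebra fact is invoked, the remainder of the argument is a mechanical unwinding of the definition of $\Omega_S$ via not-necessarily-minimal projective presentations, combined with the compatibility of the stable lifting property with the syzygy operator.
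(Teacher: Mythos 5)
Your proof is correct and follows essentially the same route as the paper: the paper also tensors the $H$-complex $P\rightarrow k$ with the previously constructed lift and takes the kernel, defining $K_{n+1}=\ker(P\otimes K_n\rightarrow K_n)$ inductively (your $K^{\otimes n}\otimes M$ is exactly this module made explicit), with the same appeal to $S$-projectivity of $P\otimes(-)$ and a dual argument for negative $n$.
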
 

\begin{proof} Consider the surjective $H$-module 
homomorphism $P\rightarrow k$. We will prove that 
$\Omega^{n}_{S}(M)$ stably lifts by induction on 
$n\in {\mathbb N}$. For $n\leq 0$ a 
similar inductive argument can be used. 

For $n=0$, $\Omega_{S}^{0}(M)=M$ which is 
an $H$-module so we can set $K_{0}=M$. Now 
assume that $K_{n}|_{S}\cong 
\Omega^{n}_{S}(M)\oplus Q_{n}$ where 
$K_{n}$ is an $H$-module $K_{n}$ and $Q_{n}$ is 
a projective $S$-module. Now define 
$K_{n+1}$ as the kernel in the short 
exact sequence obtained by tensoring 
the complex $P\rightarrow k$ by $K_{n}$: 
$$
0\rightarrow K_{n+1}\rightarrow P\otimes K_{n} 
\rightarrow K_{n} \rightarrow 0.
$$ 
Then $K_{n+1}$ is an $H$-module with 
$K_{n+1}|_{S}\cong \Omega^{n+1}_{S}(M)\oplus Q_{n+1}$ 
for some projective $S$-module $Q_{n+1}$. 
\end{proof}

Let $G$ be a reductive group with subgroups 
$P$, $B$ and $U$as before. The existence of 
the Steinberg representation $St_{r}$ can be 
used to prove that every projective $G_{r}$ 
(resp. $P_{r}$, $B_{r}$, $U_{r}$) module stably 
lifts to $G$ (resp. $P$, $B$, $U$). 
It is only known for $p\geq 2(h-1)$ that 
projective $G_{r}$-modules lift to $G$, 
but there is strong evidence this holds for all 
$p$. We can now prove that for reductive 
groups and their associated Lie type subgroups that 
the syzygies of the trivial module lift stably. 
The proof also utilizes the existence of the 
Steinberg representation.

\begin{thm}\label{T:reductive} Let $H=G$ 
(resp. $P$, $B$, $U$) and $S=G_{r}$ 
(resp. $P_{r}$, $B_{r}$, $U_{r}$). For each 
$n\in {\mathbb Z}$, $\Omega_{S}^{n}(k)$ 
stably lifts to $H$. 
\end{thm}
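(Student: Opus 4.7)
The plan is to apply Proposition~\ref{P:stableiso} with $M = k$ the trivial $H$-module; in that case its conclusion is exactly that $\Omega_S^n(k)$ stably lifts to $H$ for every $n \in \mathbb{Z}$. Thus the task reduces to exhibiting a single projective $S$-module $P$ that lifts to an $H$-module and admits a surjective $H$-linear map to $k$.

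The natural candidate is built from the $r$-th Steinberg module $V := \mathrm{St}_r = L((p^r-1)\rho)$, namely $P := V \otimes V^*$. Three properties of $V$ need to be verified. First, $V$ is a simple $G$-module, so $V$ and $V^*$ restrict to $H$-modules for each of $H \in \{G, P_J, B, U\}$; hence $P = V \otimes V^*$ is an $H$-module. Second, $V$ is projective as an $S$-module in each of the cases $S \in \{G_r, (P_J)_r, B_r, U_r\}$: for $G_r$, $B_r$, $U_r$ this is classical, and the parabolic case follows by combining the $G_r$-projectivity with the fact that $\Dist(G_r)$ is free as a module over the sub-Hopf-algebra $\Dist((P_J)_r)$ (a Hopf-algebraic normal basis / PBW decomposition). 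Since tensoring an $S$-projective module with any $S$-module yields an $S$-projective module (as $S$ is cocommutative), $P = V \otimes V^*$ is $S$-projective in all four cases. Third, the canonical evaluation pairing
\[
\mathrm{ev} \colon V \otimes V^* \longrightarrow k, \qquad v \otimes f \longmapsto f(v),
\]
is $H$-equivariant and surjective (it is a nondegenerate pairing and $V \neq 0$). With these three properties in hand, Proposition~\ref{P:stableiso} applied to the trivial $H$-module yields the theorem.

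A point worth flagging: the natural $H$-map $k \to V \otimes V^*$ sending $1 \mapsto \mathrm{id}_V$, composed with $\mathrm{ev}$, equals multiplication by $\dim V = p^{rN}$, which is zero in characteristic $p$. Hence $k$ is not split off as a summand of $P$ through this construction, and it is essential that the proposition requires only a surjection, not a splitting of $k$ off $P$. The main obstacle in the argument is the verification of $S$-projectivity of $V$ in the parabolic case $S = (P_J)_r$, since the other three cases are standard; the Hopf-algebraic freeness of $\Dist(G_r)$ over $\Dist((P_J)_r)$ is the key input that reduces the parabolic case to the $G_r$-case.
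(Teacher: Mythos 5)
Your proposal is correct and follows essentially the same route as the paper: the paper also takes $P = \mathrm{St}_r \otimes L((p^r-1)\rho)$ (which, by self-duality of the Steinberg module, is your $\mathrm{St}_r \otimes \mathrm{St}_r^*$), cites the surjection $P \to k$ from Jantzen II.10.15, and then invokes Proposition~\ref{P:stableiso} with $M = k$. The only cosmetic difference is that the paper treats just $H=G$, $S=G_r$ and obtains the parabolic, Borel and unipotent cases by restricting the resulting $G$-module, whereas you verify the hypotheses of the proposition (projectivity of $\mathrm{St}_r$ over each $S$ via freeness of $\Dist(G_r)$ over the subalgebras) directly in each case.
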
 

\begin{proof} It suffices to prove the theorem in the 
case that $H=G$ and $S=G_{r}$. The other cases will follow 
by restriction. Let $\text{St}_{r}=L((p^{r}-1)\rho)$ 
be the Steinberg module, and set $P:=
\text{St}_{r}\otimes L((p^{r}-1)\rho)$. Then there 
exists a surjective $G$-module homomorphism $P\rightarrow k$ 
\cite[II 10.15 Lemma]{Jan}. The result now 
follows by Proposition~\ref{P:stableiso}
\end{proof}

We note that it is not trivial to prove the fact 
that the left $U_{r}$-module structure on 
$\text{Dist}(U_{r})$ lifts to $H=U$. 
The conjugation action of $U_{r}$ on 
$\text{Dist}(U_{r})$ lifts to $U$ and there 
exists a $U$-module map $\text{Dist}(U_{r})\rightarrow k$ 
under the conjugation action. However, 
the module $\text{Dist}(U_{r})$ is not 
a projective module under this action 
(i.e., the conjugation action does not 
lift the left action of $\text{Dist}(U_{r})$ 
on itself). 

\begin{cor} Let $H=B$ (resp. $U$), and 
$S=B_{r}$ (resp. $U_{r}$). Then every endotrivial 
$S$-module lifts stably 
to an $H$-module. 
\end{cor}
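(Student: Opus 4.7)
The plan is to combine the classification of $T(B_{1})$ and $T(U_{1})$ from \cite[Theorems 5.7, 6.1]{CN} with the stable lifting of syzygies established in Theorem~\ref{T:reductive}. The guiding observation is that the subset of $T(S)$ consisting of classes admitting a stable lift to $H$ forms a subgroup: if $K_{1}|_{S}\cong M_{1}\oplus Q_{1}$ and $K_{2}|_{S}\cong M_{2}\oplus Q_{2}$ with $Q_{1},Q_{2}$ projective, then $(K_{1}\otimes K_{2})|_{S}\cong (M_{1}\otimes M_{2})\oplus Q$ for some projective $Q$, since tensoring a projective module with any module over a finite group scheme yields a projective module, and duals behave similarly. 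Thus it suffices to stably lift a generating set of $T(S)$.

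For $S=U_{1}$, the algebra $\Dist(U_{1})$ is local, so the only one-dimensional $U_{1}$-module is the trivial module, and the classification then shows that every endotrivial $U_{1}$-module is stably equivalent to $\Omega_{S}^{n}(k)$ for some $n\in\bZ$. Such modules stably lift to $U$ by Theorem~\ref{T:reductive}. For $S=B_{1}$, the classification presents the generators of $T(B_{1})$ in the form $\Omega_{S}^{n}(k)\otimes\lambda$, where $\lambda$ runs over $X(T_{1})=X(T)/pX(T)$. Each such $\lambda$ admits a lift to a weight in $X(T)$, and hence extends to a genuine one-dimensional $B$-module. Combining this outright lift with the stable lift of $\Omega_{S}^{n}(k)$ from Theorem~\ref{T:reductive}, and using the subgroup property recorded above, one gets a stable lift of $\Omega_{S}^{n}(k)\otimes\lambda$ to $B$.

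The main obstacle is the reliance on a clean presentation of $T(B_{1})$ by generators of the form $\Omega^{n}(\lambda)$; where the classification in \cite{CN} contains sporadic generators in small rank or small characteristic, each must be recognised, up to stable equivalence, as $\Omega^{n}(\lambda)$ for some character $\lambda$, after which Proposition~\ref{P:stableiso} applied to $M=\lambda$ finishes the argument. That the ``stably'' qualifier is genuinely necessary is underscored by the example $\Omega^{2}(k)$ for type $A_{2}$ at $p=2$, which the introduction promises as an endotrivial $B_{1}$-module that does \emph{not} admit a $B$-structure outright.
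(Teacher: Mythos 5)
Your overall strategy is the same as the paper's: invoke the classification of $T(B_1)$ and $T(U_1)$ from \cite{CN}, stably lift syzygies of characters via Theorem~\ref{T:reductive} (equivalently Proposition~\ref{P:stableiso} applied to a one-dimensional $B$-module), and the observation that the classes admitting a stable lift are closed under tensor product and duals is a legitimate (and useful) supplement to the paper's rather terse reduction to generators. However, there is a genuine gap in how you dispose of the exceptional case. When $\Phi=A_2$, $p=2$ and $r=1$, the classification does \emph{not} present $T(B_1)$ (or $T(U_1)$) as generated solely by syzygies of characters: there is an extra generator, namely the restriction to $B_1$ of the three-dimensional simple module $L(\omega_1)$, and this generator is \emph{not} stably equivalent to $\Omega^n(\lambda)$ for any character $\lambda$. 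So your proposed remedy for the ``sporadic generators'' --- to recognise each of them, up to stable equivalence, as $\Omega^n(\lambda)$ and then apply Proposition~\ref{P:stableiso} with $M=\lambda$ --- cannot be carried out, and the same defect infects your claim that every endotrivial $U_1$-module is stably equivalent to $\Omega^n_{U_1}(k)$ (locality of $\Dist(U_1)$ only rules out nontrivial characters; it does not force the classification to consist of syzygies of $k$, and in the $A_2$, $p=2$ case it does not).

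The correct treatment of the exceptional generator is simpler than what you attempt: $L(\omega_1)$ is already a $G$-module, hence a $B$- (resp.\ $U$-) module by restriction, so it lifts outright, and all of its syzygies $\Omega^n_{B_1}(L(\omega_1))$ stably lift by Proposition~\ref{P:stableiso}; combined with your subgroup observation this finishes that case, which is exactly what the paper does. (The example $\Omega^2_{B_1}(k)$ in type $A_2$, $p=2$ that you cite is a separate phenomenon: it is a syzygy of the trivial character, stably lifts, but does not lift outright; it is not the sporadic generator.) A further mismatch worth fixing: the corollary is stated for all $r$, while your argument is written only for $r=1$; the paper quotes the general-$r$ statements ($T(B_r)\cong X(T_r)\times T(U_r)$, and every endotrivial $B_r$-module is $\Omega^n_{B_r}(\lambda)$ with $\lambda\in X_r(T)$ outside the single exceptional case), so you would need to invoke those rather than only the $r=1$ theorems.
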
 

\begin{proof} We first consider the case 
that $H=B$ and $S=B_{r}$. According to 
\cite[Theorem 6.1, 6.2]{CN}, 
$T(B_{r})\cong X(T_{r})\times T(U_{r})$. 
The one dimensional $B_{r}$ endotrivial 
modules corresponding elements 
of $X(T_{r})$ are all $B$-modules. 
Therefore, it suffices to prove the 
statement when $H=U$ and $S=U_{r}$. 

Assume that $\Phi$ is not $A_{2}$ in the case that
$p=2$ and $r=1$. Then any endotrivial 
$B_{r}$-module is isomorphic to 
$\Omega^{n}_{B_{r}}(\lambda)$ for some 
$\lambda\in X_{r}(T)$. Since $\lambda$ 
lifts to a $B$-module, by Theorem~\ref{T:reductive}, 
$\Omega^{n}_{B_{r}}(\lambda)$ stably lifts to $B$. 

In the case when $\Phi$ is of type $A_{2}$ 
the endotrivial group $T(B_{1})$ is generated 
by $\Omega^{1}_{B_{1}}(\lambda)$ and 
the simple three dimensional $G$-module 
$L(\omega_{1})$ considered as $B_{1}$-module 
by restriction. Since $L(\omega_{1})$ 
is a $B$-module all of its syzygies 
$\Omega^{n}_{B_{1}}(L(\omega_{1}))$ stably 
lift to $B$ by Proposition~\ref{P:stableiso}. 
\end{proof} 

When $G$ is a reductive algebraic group 
scheme we can state a relationship between 
a $G_{r}$-module lifting stably to $G$ and 
tensor stability as a direct application of 
\cite[Theorem 1.1]{PS}. This seems to indicate 
that stably lifting is a stronger form of 
lifting that tensor stability. 

\begin{prop} Let $G$ be reductive and let $M$ 
be a $G_{r}$-module which lifts stably to $G$. Suppose that $N$ 
is a $G$-module such that $N|_{G_{r}}=M\oplus P$ 
where $P$ is a projective $G_{r}$-module (i.e., a stable 
lifting of $M$). If $\operatorname{soc}_{G_{r}}M$ 
is a $G$-submodule of $N$ then $M$ is tensor $G_{r}$-stable. 
\end{prop}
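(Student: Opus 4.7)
The plan is to present this proposition as a direct application of \cite[Theorem 1.1]{PS}, which (in the reductive setting) supplies a criterion upgrading a suitable $G$-lifting with a compatible socle to tensor $G_{r}$-stability. Our input is precisely the stable lifting $N$ with $N|_{G_{r}}\cong M\oplus P$, $P$ projective, together with the hypothesis that $\operatorname{soc}_{G_{r}}(M)$ is a $G$-submodule of $N$.

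First I would unpack the structural content of the socle hypothesis. Because $G_{r}$ is normal in $G$, the functor $\operatorname{soc}_{G_{r}}(-)$ commutes with the $G$-action on rational modules, so $\operatorname{soc}_{G_{r}}(N)$ is automatically a $G$-submodule of $N$, and its $G_{r}$-restriction decomposes as
\[
\operatorname{soc}_{G_{r}}(N) \;\cong\; \operatorname{soc}_{G_{r}}(M)\;\oplus\;\operatorname{soc}_{G_{r}}(P).
\]
The assumption that $\operatorname{soc}_{G_{r}}(M)$ is itself a $G$-submodule then promotes this to a decomposition of $G$-modules (the complementary summand acquiring a $G$-structure as the quotient $\operatorname{soc}_{G_{r}}(N)/\operatorname{soc}_{G_{r}}(M)$). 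This is exactly the socle-compatibility hypothesis required to invoke the Parshall--Scott criterion.

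With that compatibility in place, I would feed the pair $(N,\operatorname{soc}_{G_{r}}(M))$ into \cite[Theorem 1.1]{PS} to obtain a finite-dimensional $G/G_{r}$-module $Y$ together with a $G$-structure on $M\otimes Y$ whose $G_{r}$-restriction agrees with $M\otimes (Y|_{G_{r}})$. By definition this says that $M$ is tensor $G_{r}$-stable, which is the desired conclusion.

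The main obstacle lies in reconciling the projective summand $P$ in $N|_{G_{r}}$ with the exact formulation of \cite[Theorem 1.1]{PS}, which is most naturally stated for genuine liftings rather than stable liftings. I would dispose of this by appealing to the fact that projective $G_{r}$-modules lift to $G$ (via the Steinberg construction, as already used in Theorem~\ref{T:reductive}), so $P$ itself admits a $G$-structure and can be absorbed into the bookkeeping (or into an enlargement of the twisting module $Y$) without altering the tensor-stability conclusion for $M$. Once this point is settled, the proposition follows formally.
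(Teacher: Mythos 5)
Your main line coincides with the paper's own treatment: the proposition is stated there without a separate argument, precisely as a direct application of \cite[Theorem 1.1]{PS}, with the stable lifting $N$ (so $M$ a $G_r$-direct summand of $N|_{G_r}$) and the hypothesis that $\operatorname{soc}_{G_{r}}M$ is a $G$-submodule of $N$ serving exactly as the inputs to that criterion, which is what you do. Two cautions on your elaborations, neither of which is needed for the conclusion: the $G_r$-socle decomposition $\operatorname{soc}_{G_{r}}(N)\cong\operatorname{soc}_{G_{r}}(M)\oplus\operatorname{soc}_{G_{r}}(P)$ is not thereby a decomposition of $G$-modules (only the quotient $\operatorname{soc}_{G_{r}}(N)/\operatorname{soc}_{G_{r}}(M)$ carries a $G$-structure, not the particular complement sitting inside $N$), and your fallback claim that the projective summand $P$ lifts to $G$ via the Steinberg construction overstates what is known --- the paper itself points out that lifting of projective $G_r$-modules to $G$ is only established for $p\geq 2(h-1)$; projectives are only known to lift stably (or to $G_rT$). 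Since the Parshall--Scott criterion is applied to the $G$-module $N$ with the direct-summand and socle hypotheses as given, no absorption of $P$ is required, and with that hedge removed your proposal is essentially the paper's argument.
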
 

In the next theorem we give a condition on 
the quotient $H/S$ which insures that we 
can lift syzygies. 

\begin{thm} \label{T:lift} Let $S$ be a 
finite group scheme which is a normal subgroup scheme of $H$. 
Assume that 
\begin{itemize} 
\item[(i)] If $L$ is a simple $H$-module, 
then $L|_{S}$ is a simple $S$-module, and 
all simple $S$-modules lift to $L$. 
\item[(ii)] For any simple $H$-module $L$ 
there exists a $H$-module $Q(L)$ such that 
$Q(L)|_{S}$ is the projective cover $L|_{S}$. 
\item[(iii)] All finite-dimensional modules 
for $H/S$ are completely reducible. 
\end{itemize} 
Let $M$ be a finite-dimensional $H$-module. 
Assume that the projective cover $P(M)$ of $M$ as 
an $S$-module lifts to an $H$-module and there exists 
a surjective $H$-homomorphism $P(M)\rightarrow M$.  
Then $\Omega^{n}_{S}(M)$ lifts to an $H$-module 
for all $n\in {\mathbb Z}$. 
\end{thm}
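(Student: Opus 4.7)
The plan is an induction on $n$, separately for $n\ge 0$ and $n\le 0$. The base case $n=0$ is immediate, and the step $n=0\to n=1$ is built into the hypotheses: the kernel of the assumed $H$-surjection $P(M)\twoheadrightarrow M$ is an $H$-submodule of $P(M)$ whose restriction to $S$ is $\Omega_{S}^{1}(M)$ (with no projective summand, as $P(M)$ is the $S$-projective cover). For $n\ge 2$ the inductive step reduces to the following \emph{key claim}: for every finite-dimensional $H$-module $N$, the $S$-projective cover of $N|_{S}$ admits an $H$-module structure together with a surjective $H$-homomorphism onto $N$. Applied to $N=\Omega_{S}^{n}(M)$ this realizes $\Omega_{S}^{n+1}(M)$ as the kernel of an $H$-surjection, hence as an $H$-submodule of an $H$-module. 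The case $n\le -1$ is handled by the dual construction with injective envelopes (which coincide with projective covers because $kS$ is self-injective), or equivalently by applying the positive case to $M^{*}$ and invoking $\Omega_{S}^{-1}(M)\cong\Omega_{S}^{1}(M^{*})^{*}$.

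To prove the key claim, note first that since $S$ is normal in $H$, the Jacobson radical $J(kS)$ is an $H$-stable two-sided ideal under the conjugation action, so $\Rad_{S}(N)=J(kS)\cdot N$ is an $H$-submodule of $N$. Thus $\overline{N}:=N/\Rad_{S}(N)$ is an $H$-module that is semisimple as an $S$-module. Using (i), the $S$-isotypic components of $\overline{N}$ can be indexed as $V_{\lambda}$ corresponding to simple $H$-modules $L_{\lambda}$ with $L_{\lambda}|_{S}=\lambda$; each $V_{\lambda}$ is $H$-stable because the $H$-module structure on $L_{\lambda}$ forces $\lambda^{h}\cong\lambda$ as $S$-modules for every $h\in H$. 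The canonical evaluation map $L_{\lambda}\otimes\Hom_{S}(L_{\lambda},V_{\lambda})\to V_{\lambda}$ is an $H$-isomorphism, with $\Hom_{S}(L_{\lambda},V_{\lambda})$ carrying a natural $H/S$-module structure. Hypothesis (iii) decomposes this factor into simple $H/S$-summands $W$, and each $L_{\lambda}\otimes W$ is a simple $H$-module (its $H$-submodules correspond to $H/S$-subspaces of $W$). Hence $\overline{N}=\bigoplus_{j}M_{j}$ with $M_{j}$ simple $H$-modules, and by (ii) (adjusting each lift by a one-dimensional $H/S$-twist, if necessary, so that its $H$-top equals $M_{j}$) the sum $P:=\bigoplus_{j}Q(M_{j})$ is an $H$-module whose restriction to $S$ is the $S$-projective cover of $N|_{S}$, equipped with a canonical $H$-surjection $P\twoheadrightarrow\overline{N}$.

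The remaining step---and the main obstacle---is to lift this $H$-surjection along $N\twoheadrightarrow\overline{N}$ to an $H$-map $\widetilde{\pi}\colon P\to N$. Applying $\Hom_{S}(P,-)$ to the short exact sequence of $H$-modules $0\to\Rad_{S}(N)\to N\to\overline{N}\to 0$ yields, by $S$-projectivity of $P$, a short exact sequence of $k$-spaces on which $S$ acts trivially, and hence a short exact sequence of $H/S$-modules. Taking $H/S$-invariants and invoking (iii), which makes $H/S$ linearly reductive and forces $H^{1}(H/S,-)=0$ on finite-dimensional modules, produces a surjection $\Hom_{H}(P,N)\twoheadrightarrow\Hom_{H}(P,\overline{N})$. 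Any preimage $\widetilde{\pi}$ of the canonical map satisfies $\Im(\widetilde{\pi})+\Rad_{S}(N)=N$, so Nakayama's lemma for the finite-dimensional algebra $kS$ forces $\widetilde{\pi}$ to be surjective. Its kernel is an $H$-submodule of $P$ restricting to $\Omega_{S}^{1}(N)$ (with no projective summands, since $P|_{S}$ is the $S$-projective cover), completing the inductive step.
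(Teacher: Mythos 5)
Your proof is correct, and its skeleton is the same as the paper's: induct on $n$, use normality of $S$ to see that $\Rad_S$ of the current syzygy is an $H$-submodule, realize the head as a direct sum of simple $H$-modules, use (ii) to produce an $H$-form $Q$ of the projective cover of the head together with an $H$-surjection onto it, and then lift that surjection to an $H$-map $Q \rightarrow \Omega^n_S(M)$ using $S$-projectivity of $Q|_S$ plus complete reducibility of $H/S$-modules (your invariants argument is just the paper's identification $\Ext^1_H(Q,\Rad_S\Omega^n_S(M)) \cong \Ext^1_S(Q,\Rad_S\Omega^n_S(M))^{H/S} = 0$ phrased as exactness of $(-)^{H/S}$), with Nakayama giving surjectivity and the kernel being the next syzygy. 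The two places where you diverge are local and both legitimate: (1) to see that the head is a semisimple $H$-module, the paper argues that any nonsplit $H$-extension of simples inside the head would split over $S$ and hence over $H$ by the collapse of the Lyndon--Hochschild--Serre spectral sequence, whereas you use a Clifford-type isotypic decomposition $\bigoplus_\lambda L_\lambda \otimes \Hom_S(L_\lambda,-)$ together with (iii) on the multiplicity spaces (this needs $\End_S(L_\lambda)=k$, i.e.\ $k$ algebraically closed, which is the paper's standing assumption); (2) your remark that $Q(L)$ may need to be twisted by a one-dimensional $H/S$-character so that its $H$-head is the prescribed simple summand is a genuine point that the paper passes over silently when it asserts the existence of the surjection $\gamma$, so your version is slightly more careful. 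A further small bonus of your formulation is that the key claim, proved for arbitrary finite-dimensional $H$-modules $N$, already yields the step from $M$ to $\Omega^1_S(M)$, making the explicit hypothesis on $P(M)$ essentially redundant; your treatment of negative $n$ by dualizing is at the same level of detail as the paper's.
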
 

\begin{proof} We begin with an observation about the 
cohomology. For any $H$-module $N$, there exists a 
Lyndon-Hochschild-Serre (LHS) spectral sequence: 
$$
E_{2}^{i,j}:\text{Ext}^{i}_{H/S}(k,\text{Ext}^{j}_{S}(k,N))
\Rightarrow \text{Ext}^{i+j}_{H}(k,N).
$$ 
Condition (iii) implies that this spectral sequence 
collapses and hence, the 
restriction map $\text{Ext}^{j}_{H}(k,N)\rightarrow 
\text{Ext}^{j}_{S}(k,N)^{H/S}$ is an 
isomorphism for all $j\geq 0$. 

If suffices to assume that $n\geq 0$ and 
prove the theorem by induction. The case 
when $n$ is negative can be handled by 
using a dual argument. For $n=0$, we have that
$\Omega^{0}_{S}(M)\cong M$ and for $n=1$, 
$\Omega^{1}_{S}(M)\cong \text{ker}(P(M)\rightarrow M)$. 
So these modules lift to $H$. 

Suppose that $\Omega^{n}_{S}(M)$ lifts to $H$. 
The $S$-submodule $\text{Rad}_{S}\Omega^{n}_{S}(M)$ 
is an $H$-submodule of $\Omega^{n}_{S}(M)$ so 
there exists a surjective  $H$-module map 
$$
\pi: \Omega^{n}_{S}(M)\rightarrow \Omega^{n}_{S}(M)/ 
\text{Rad}_{S}\Omega^{n}_{S}(M).
$$
The quotient module  
$\Omega^{n}_{S}(M)/ \text{Rad}_{S}\Omega^{n}_{S}(M)$ 
is completely reducible 
as an $S$-module. Furthermore, it must be 
completely reducible as an $H$-module. For if there exists 
a non-trivial extension of simple 
$H$-modules which lives as an $H$-submodule 
in $\Omega^{n}_{S}(M)/ \text{Rad}_{S}\Omega^{n}_{S}(M)$,
then by condition (i), these simple modules 
remain simple upon restriction to $S$ 
and this extension must split over $S$ 
(by complete reducibility of 
the quotient module). Then by the first 
observation above, the original extension 
over $H$ must split. 

By condition (ii), there exists an $H$-module 
$Q$ whose restriction to $S$ is the projective 
cover of $\Omega^{n}_{S}(M)/ \text{Rad}_{S}\Omega^{n}_{S}(M)$ 
with a surjective $H$-module map 
$\gamma:Q\rightarrow 
\Omega^{n}_{S}(M)/ \text{Rad}_{S}\Omega^{n}_{S}(M)$.  
We have a 
short exact sequence of $H$-modules: 
$$
0\rightarrow \text{Rad}_{S}\Omega^{n}_{S}(M)
\rightarrow \Omega^{n}_{S}(M)\rightarrow 
\Omega^{n}_{S}(M)/ \text{Rad}_{S}\Omega^{n}_{S}(M) \rightarrow 0.
$$ 
Observe that $\text{Ext}^{1}_{H}(Q,\text{Rad}_{S}\Omega^{n}_{S}(M))=\text{Ext}^{1}_{S}(Q, \text{Rad}_{S}\Omega^{n}_{S}(M))^{H/S}=0$. 
Therefore, in the long exact sequence 
in cohomology the map 
$$
\text{Hom}_{H}(Q,\Omega^{n}_{S}(M))\rightarrow  \text{Hom}_{H}(Q,\Omega^{n}_{S}(M)/ \text{Rad}_{S}\Omega^{n}_{S}(M))
$$ 
is surjective and we can find and 
$H$-module map $\delta:Q\rightarrow \Omega^{n}_{S}(M)$ 
such that  $\pi\circ \delta=\gamma$ and 
$\Omega^{n+1}_{S}(M)=\text{ker }\delta$. 
Consequently, $\Omega^{n+1}_{S}(M)$ is an $H$-module.  
\end{proof} 

In the case that $H=G_{r}T$ (resp. $P_{r}T$, $B_{r}T$) 
and $S=G_{r}$ (resp. $P_{r}$, $B_{r}$) 
conditions (i)-(iii) of the 
preceding theorem can be verified 
(cf. \cite[Chapter 9]{Jan}).

\begin{cor} \label{C:G_rTlift} 
Let $H=G_{r}T$ (resp. $P_{r}T$, $B_{r}T$) 
and $S=G_{r}$ (resp. $P_{r}$, $B_{r}$). If $M$ is 
an $H$-module then for each $n\in {\mathbb Z}$, 
$\Omega_{S}^{n}(M)$ is an $H$-module. 
\end{cor}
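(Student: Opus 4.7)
The plan is to invoke Theorem~\ref{T:lift} directly for each of the three pairs $(H,S)$. It suffices to verify conditions (i)--(iii) of that theorem and, for an arbitrary $H$-module $M$, that the projective $S$-cover $P(M)$ lifts to an $H$-module together with a surjective $H$-map onto $M$.

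First, condition (iii) is immediate: in each case the quotient $H/S$ is isomorphic, via the Frobenius morphism $F^{r}$, to the torus $T$ (or a homomorphic image thereof), and finite-dimensional rational modules for a torus are weight-graded, hence completely reducible. Conditions (i) and (ii) are standard consequences of the representation theory of Frobenius kernels developed in \cite[II.9, II.11]{Jan}: the simple $G_{r}T$-modules are parametrized by $X(T)$ and restrict to the full collection of simple $G_{r}$-modules, while the $G_{r}$-projective covers $Q_{r}(\lambda)$ lift to $G_{r}T$-modules $\hat{Q}_{r}(\lambda)$. The $P_{r}T$- and $B_{r}T$-analogues are obtained by the same arguments (again \cite[Ch.~9]{Jan}).

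For the remaining projective-cover hypothesis, note that $\text{Rad}_{S} M$ is $H$-stable since $H$ normalizes $S$ and the $S$-radical is characterized as the intersection of all maximal $S$-submodules. Hence $M/\text{Rad}_{S} M$ inherits an $H$-structure. Exactly as in the proof of Theorem~\ref{T:lift}, conditions (i) and (iii) together force this head to be completely reducible as an $H$-module; then (ii) produces an $H$-module $P$ whose restriction to $S$ is the projective $S$-cover of $M$, together with an $H$-surjection $P \to M/\text{Rad}_{S} M$. The collapse of the LHS spectral sequence afforded by (iii) gives $\text{Ext}^{1}_{H}(P,\text{Rad}_{S} M) = 0$, so this surjection lifts to an $H$-map $P \to M$, which is surjective by Nakayama's lemma. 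Theorem~\ref{T:lift} then yields the corollary. The only non-formal step I anticipate is bookkeeping around the semisimplicity of the head as an $H$-module and locating the precise references in \cite{Jan} for the liftings of simple modules and their projective covers; everything else is a direct citation.
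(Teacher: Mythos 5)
Your proposal matches the paper's own argument: the corollary is deduced by applying Theorem~\ref{T:lift}, with conditions (i)--(iii) for $G_{r}T$, $P_{r}T$, $B_{r}T$ checked from the standard theory of Frobenius kernels in \cite[Chapter 9]{Jan}, exactly as you do. Your extra step deriving the projective-cover hypothesis of Theorem~\ref{T:lift} from (i)--(iii) (via the $H$-stability of $\mathrm{Rad}_{S}M$ and the vanishing of $\operatorname{Ext}^{1}_{H}$ into $\mathrm{Rad}_{S}M$) correctly fills in a detail the paper leaves to that same citation.
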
 

The conditions (i)-(iii) do not hold for 
when $H=G$ and $S=G_{r}$. Nonetheless, we can prove 
by a direct calculation that all endotrivial 
$G_{1}$-modules for $G=SL_{2}$ lift to $G$. 

\begin{thm} Let $G=SL_{2}$. Then every 
endotrivial $G_{1}$-module lifts to $G$. 
\end{thm}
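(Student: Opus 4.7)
The plan is to combine the classification of $T(G_{1})$ for rank-one groups with the fact that, since $h = 2$ for $G = SL_2$, the hypothesis $p \geq 2(h-1) = 2$ holds for every prime $p$. In particular, each indecomposable projective $G_{1}$-module $Q_{1}(\lambda)$ lifts to a rational $G$-module $\tilde{Q}_{1}(\lambda)$, and every $p$-restricted simple $L(\lambda)$ with $\lambda \in X_{1}(T)$ is already a $G$-module via restriction.

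First I would invoke the rank-one specialization of the classification in \cite{CN} together with Theorem~\ref{T:reductive} to show that every endotrivial $G_{1}$-module is equivalent in $\text{StMod}(G_{1})$ to a module of the form $L(\lambda) \otimes \Omega^{n}_{G_{1}}(k)$, where $\lambda \in X_{1}(T)$ is chosen so that $L(\lambda)$ is itself endotrivial. For $p$ odd only $\lambda = 0$ occurs, while for $p = 2$ the natural representation $L(\omega_{1})$ also appears. Since each such simple is already a $G$-module, it suffices to prove that $\Omega^{n}_{G_{1}}(k)$ lifts to $G$ for every $n \in \mathbb{Z}$.

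Next I would construct the lift by induction on $|n|$. For $n = 0$ the statement is trivial. Given that $\Omega^{n}_{G_{1}}(k)$ has been realized as a rational $G$-module $\tilde{M}_{n}$, one identifies the $G_{1}$-head of $\tilde{M}_{n}$; by the explicit structure of projective resolutions over $\Dist(G_{1})$ for $\mathfrak{sl}_{2}$, this head is a single simple $G_{1}$-module $L(\lambda_{n})$, which moreover inherits a rational $G$-structure from the $T$-action on $\tilde{M}_{n}/\Rad_{G_{1}} \tilde{M}_{n}$. The $G$-lift $\tilde{Q}_{1}(\lambda_{n})$ of the corresponding projective cover then admits a $G$-equivariant surjection onto $\tilde{M}_{n}$, and its kernel yields a rational $G$-structure on $\Omega^{n+1}_{G_{1}}(k)$. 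Negative syzygies are treated by the dual argument using the self-injectivity of $\Dist(G_{1})$.

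The main obstacle is verifying that the $G_{1}$-projective cover of $\tilde{M}_{n}$ may be chosen $G$-equivariantly; this requires that the $G_{1}$-head of each syzygy lift to a $G$-submodule of the semisimple quotient $\tilde{M}_{n}/\Rad_{G_{1}} \tilde{M}_{n}$, which in turn rests on the explicit Loewy structure of $\Omega^{n}_{G_{1}}(k)$ for $SL_2$ (alternating between the trivial module and $L(p-2)$ in the $p$-odd case, with a similar two-layered pattern when $p=2$). Once this identification is made, the existence of the equivariant surjection follows from the universal property of the projective cover together with the fact that the $T$-action, and hence the full $G$-action, passes compatibly through head and socle layers.
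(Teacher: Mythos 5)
Your inductive step contains the essential gap. Having lifted $\Omega^{n}_{G_{1}}(k)$ to a rational $G$-module $\tilde{M}_{n}$, you need a \emph{$G$-equivariant} surjection from a $G$-lift of the $G_{1}$-projective cover onto $\tilde{M}_{n}$; the "universal property of the projective cover" only gives a $G_{1}$-map, and promoting it to a $G$-map is precisely the obstruction. The mechanism that would do this is Theorem~\ref{T:lift}: there one needs $\operatorname{Ext}^{1}_{H}(Q,\operatorname{Rad}_{S}\tilde{M}_{n})=0$, which is obtained from the LHS spectral sequence using complete reducibility of $H/S$-modules (condition (iii)) together with conditions (i)--(ii). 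For $H=G$ and $S=G_{1}$ these hypotheses fail ($G/G_{1}\cong G^{(1)}$ is not linearly reductive, and simple $G$-modules need not restrict to simple $G_{1}$-modules), as the paper notes explicitly just before this theorem; and Section 6 exhibits a closely parallel situation ($B_{1}\subset B$, type $A_{2}$, $p=2$) in which the projective cover of $k$ lifts with a surjection onto $k$ and yet $\Omega^{2}(k)$ does \emph{not} lift, so the "lift the cover and take the kernel" scheme cannot be correct as stated without a genuine vanishing argument. The paper instead proceeds by direct identification: the minimal projective resolutions of $k$ and of $L(p-2)$ over $G_{1}$ are built from tilting modules with $G$-equivariant differentials, and the syzygies are explicitly Weyl modules $V(\lambda)$, hence already rational $G$-modules (alternatively, it cites the tame representation type classification of \cite{CNP}, where all complexity-two indecomposables are seen to lift). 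Nothing in your proposal supplies the computation or vanishing statement that replaces this.

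There is also a problem in your reduction via the classification. For $p$ odd the torsion generator of $T(G_{1})\cong {\mathbb Z}\oplus{\mathbb Z}_{2}$ is represented by $L(p-2)$, not by the trivial module, so restricting to $\lambda=0$ omits the classes $\Omega^{n}(L(p-2))$, i.e.\ half of the endotrivial modules; and for $p=2$ the natural module $L(\omega_{1})$ is the Steinberg module, which is projective over $G_{1}$ and therefore not a nontrivial endotrivial class --- you appear to have imported the $B_{1}$, type $A_{2}$, $p=2$ phenomenon from the Borel case into the $G_{1}$ setting, where it does not occur.
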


\begin{proof} The category of $G_{1}$-modules 
has tame representation type and the indecomposable 
modules have been determined (cf. \cite[Section 3]{CNP}). 
The modules of complexity two, which include all 
endotrivial modules, lift to $G$. 

One can also verify this by using the classification 
of endotrivial modules given in \cite{CN}. 
The endotrivial group is 
${\mathbb Z}\oplus {\mathbb Z}_{2}$ and all 
endotrivial modules 
are of the form $\Omega^{n}(M)$ 
where $n\in {\mathbb Z}$ and $M=k$ or 
$M = L(p-2)$. 

In either case $M \cong M^*$ where $M^*$ 
is the $k$-dual of $M$. 
Hence  $\Omega^{n}(M)\cong \Omega^{-n}(M)^{*}$, 
and  without loss of generality, we may 
assume that $n\geq 0$. The minimal projective 
resolution $P_{\bullet} \rightarrow k$ can be constructed 
explicitly. All the terms are tilting modules: 
$$
P_{n}\cong \begin{cases} T((\frac{n}{2}+1)2(p-1)) &  \text{if $n$ is even}, \\
T((\frac{n+1}{2})2p) & \text{if $n$ is odd}. 
\end{cases}
$$ 
Moreover, the syzygies are Weyl modules 
$$
\Omega^{n}(k)\cong 
\begin{cases} V(np) &  \text{if $n$ is even}, \\
V((\frac{n+1}{2})2(p-2)) & \text{if $n$ is odd}.
\end{cases}
$$ 
Using a similar construction, the minimal projective resolution 
$\widehat{P}_{\bullet}\rightarrow L(p-2)$ consists of 
tilting modules and the syzygies are also Weyl modules. 
$$
\widehat{P}_{n}\cong \begin{cases} 
T((n+1)p) & \text{if $n$ is even}, \\
T((n+2)p-2) & \text{if $n$ is odd}, 
\end{cases}
$$ 
$$
\Omega^{n}(L(p-2))\cong 
\begin{cases} V((n+1)p-2 ) &  \text{if $n$ is even}, \\
V(np) & \text{if $n$ is odd}.
\end{cases}
$$ 
\end{proof}

\section{An example where $\Omega^{2}_{S}(k)$ does not lift to $H$} 

In this section we show that syzygies of the trivial module 
do not, in general, lift to $H$-modules even in cases 
where all the projective indecomposable $S$-modules 
lift to $H$. Suppose that  
$G=SL_{3}$ and that $p=2$. Let 
$S=B_{1}$, $H=B$. The restricted $p$-Lie algebra ${\mathfrak u}$ of 
the unipotent radical of $G$, has the same representation theory as 
the infinitesimal unipotent subgroup $U_1 \subseteq B_1$.
In this context we prove the following. 

\begin{prop}\label{omeganolift}
The second syzygy $\Omega^{2}(k):=\Omega^{2}_{B_{1}}(k)$ 
stably lifts to a $B$-module, but does not lift to a $B$-module. 
\end{prop}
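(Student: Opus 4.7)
My plan is to first verify the easy direction, that $\Omega^{2}(k)$ stably lifts, via Theorem~\ref{T:reductive} applied with $H = B$, $S = B_{1}$, and $M = k$.  Since $k$ is a $B$-module, the theorem gives that every syzygy $\Omega^{n}_{B_{1}}(k)$ stably lifts to $B$, so in particular $\Omega^{2}(k)$ does.  The work is in showing that no genuine $B$-lift exists, which I will prove by contradiction: assume $\Omega^{2}(k)$ carries a $B$-module structure $N$.

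The first substantive step is to compute the canonical $B_{1}T$-structure of $\Omega^{2}(k)$ very explicitly, working inside the restricted enveloping algebra $u(\mathfrak{u})$ with basis $\{f_{12}^{c}f_{2}^{b}f_{1}^{a}:a,b,c\in\{0,1\}\}$ and bracket $[f_{1},f_{2}]=f_{12}$.  Because $f_{12}=f_{1}f_{2}-f_{2}f_{1}\in I\cdot I$, the top $I/I^{2}$ of the augmentation ideal $I=\Omega^{1}_{B_{1}}(k)$ has $T$-weights only $-\alpha_{1}$ and $-\alpha_{2}$, so the minimal $B_{1}T$-projective cover of $I$ is $P(-\alpha_{1})\oplus P(-\alpha_{2})$.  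A PBW computation on the short exact sequence
\[
0\to\Omega^{2}(k)\to P(-\alpha_{1})\oplus P(-\alpha_{2})\xrightarrow{(x,y)\mapsto xf_{1}+yf_{2}} I\to 0
\]
identifies $\Omega^{2}(k)$ as a $9$-dimensional submodule whose $T$-weights are the nine distinct characters $-2\alpha_{1},-2\alpha_{2},-2\alpha_{1}-\alpha_{2},-\alpha_{1}-2\alpha_{2},-3\alpha_{1}-\alpha_{2},-\alpha_{1}-3\alpha_{2},-2\alpha_{1}-2\alpha_{2},-3\alpha_{1}-2\alpha_{2},-2\alpha_{1}-3\alpha_{2}$, each of multiplicity one, and records explicit PBW generators for each weight space.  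By Corollary~\ref{C:G_rTlift}, any $B$-lift $N$ has precisely this $B_{1}T$-structure.

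The second step is to force the higher divided-power actions.  Extending $N$ to a $\Dist(U)$-module means specifying actions of the $f_{\alpha}^{(n)}$ for $\alpha\in\{\alpha_{1},\alpha_{2},\alpha_{1}+\alpha_{2}\}$ and $n\geq 2$ subject to the Kostant--Chevalley identities
\[
[f_{1},f_{2}^{(n)}]=f_{2}^{(n-1)}f_{12},\qquad [f_{2},f_{1}^{(n)}]=-f_{1}^{(n-1)}f_{12},\qquad [f_{12},f_{i}^{(n)}]=0,
\]
the product formula $f_{\alpha}^{(m)}f_{\alpha}^{(n)}=\binom{m+n}{m}f_{\alpha}^{(m+n)}$, and the Kostant relation $f_{1}^{(m)}f_{2}^{(n)}=\sum_{k}f_{2}^{(n-k)}f_{12}^{(k)}f_{1}^{(m-k)}$.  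Most of these actions vanish for weight reasons (the target $\mu-n\alpha$ is not a weight of $N$); the nonzero ones are forced by the $B_{1}$-data from Step~1.  For instance $[f_{1},f_{2}^{(2)}]v_{-2\alpha_{1}}=f_{2}f_{12}v_{-2\alpha_{1}}$ pins $f_{2}^{(2)}v_{-2\alpha_{1}}$ to be the generator of $N_{-2\alpha_{1}-2\alpha_{2}}$, and analogous reductions determine each of the remaining nonzero operators.

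The third and hardest step---where I expect the main obstacle---is to exhibit an inconsistency between two forced values.  My plan is to compute the action of a specific operator landing in a one-dimensional weight space of $N$ in two independent ways and to show that the resulting scalars disagree.  The first candidate I would pursue is the value $f_{2}^{(3)}v_{-2\alpha_{1}}\in N_{-2\alpha_{1}-3\alpha_{2}}$, computable from $f_{2}^{(3)}=f_{2}\cdot f_{2}^{(2)}$ (using the value of $f_{2}^{(2)}v_{-2\alpha_{1}}$ found in Step~2) and also from the Kostant identity $[f_{1},f_{2}^{(3)}]=f_{2}^{(2)}f_{12}$ applied to $v_{-2\alpha_{1}}$; further candidate comparisons arise from the symmetric computations starting at $v_{-2\alpha_{2}}$ and from the Kostant formula $f_{1}^{(2)}f_{2}^{(2)}=f_{2}^{(2)}f_{1}^{(2)}+f_{2}f_{12}f_{1}+f_{12}^{(2)}$ applied to vectors whose $f_{12}^{(2)}$-action is forced to vanish.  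Each such comparison reduces to a PBW calculation inside $u(\mathfrak{u})\subset\Dist(U)$, and producing one in which the two sides give different multiples of the basis vector contradicts the existence of $N$.  The rest of the proof is routine PBW bookkeeping; the whole argument then rules out a $B$-structure on $\Omega^{2}(k)$.
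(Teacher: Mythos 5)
Your first half is fine and is exactly the paper's argument: the stable lifting is Theorem~\ref{T:reductive} applied to $H=B$, $S=B_1$, $M=k$. Your description of the $B_1T$-structure of $\Omega^2(k)$ (nine one-dimensional weight spaces with the weights you list) also agrees with the paper's Figure~1. But the non-lifting half of your proposal has a genuine gap: the decisive step is never carried out. Everything hinges on Step~3, where you must exhibit a concrete inconsistency among the forced divided-power actions, and what you give instead is a list of ``candidate'' comparisons ($f_2^{(3)}v_{-2\alpha_1}$ computed two ways, the symmetric computation at $v_{-2\alpha_2}$, the Kostant identity for $f_1^{(2)}f_2^{(2)}$) together with the statement that producing a disagreement would finish the proof. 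No such disagreement is actually computed or verified, and it is not evident that any of the listed comparisons yields one: the targets such as $N_{-2\alpha_1-3\alpha_2}$ are genuine weight spaces of the module, so the two evaluations are not ruled out from agreeing, and the relations you quote are exactly the ones a consistent $\Dist(U)$-action would satisfy. There is also a subtler risk in the strategy itself: your contradiction, if it exists, must be an obstruction to a $\Dist(B)$-structure with the given $T$-grading, whereas the obstruction the paper actually finds lives in a rational extension group over $B$; it is not clear a priori that pure hyperalgebra bookkeeping (without the full torus and the rational structure) detects it, so the plan may dead-end even after substantial computation. As written, the proposition is asserted rather than proved.

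For comparison, the paper's route is cohomological rather than computational. It observes that, as $B$-modules, $\Omega^2(k)/\Rad_{B_1}\Omega^2(k)\cong\mathfrak{u}^{(1)}$ and $\Rad_{B_1}\Omega^2(k)\cong N_1\oplus N_2$ with $N_j$ the explicit three-dimensional twists of $L(\omega_1)$, notes that indecomposability of $\Omega^2(k)$ over $B_1$ forces any $B$-structure to give a nonzero class in $\Ext^1_B(\mathfrak{u}^{(1)},N_j)$ for some $j$, and then kills these groups by a Lyndon--Hochschild--Serre argument reducing to $\Hom_{B/B_1}(\mathfrak{u}^{(1)},\Ext^1_{B_1}(k,N_1))$, which is shown to vanish using Kempf's vanishing theorem and Andersen's computation of $R^1\operatorname{ind}_B^G$ to pin down the $B/B_1$-socle of $\Ext^1_{B_1}(k,N_1)$. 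If you want to salvage your approach you must actually perform one of your candidate PBW comparisons to the end and display unequal scalars; otherwise the cohomological argument is the proof.
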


\begin{proof}
The first statement follows from Theorem \ref{T:reductive}. 
We suppose that $\Omega^{2}(k)$ has a $B$-structure and prove that
this leads to a contradiction. 

Let $V:=L(\omega_{1})$ be the three dimensional natural representation for $G$ and label the simple roots 
$\Delta=\{\alpha_{1},\alpha_{2}\}$. We will consider the 
restriction of $V$ to $B$ and $B_{1}$. Set 
$$N_{1}\cong V\otimes (-2\alpha_{1}-\alpha_{2}-\omega_{1})$$ 
and 
$$N_{2}\cong V\otimes (-\alpha_{1}-2\alpha_{2}-\omega_{1}).$$ 

We can represent $\Omega^{2}(k)$ diagrammatically as in Figure 1. 
A node ($\lambda$) is a one-dimensional 
$B_i$-submodule with highest
weight $\lambda$.  The arrow indicate the 
action of the simple
root-subspace vectors in ${\mathfrak u}:=\text{Lie }U$, 
and the extensions between the 
simple one-dimensional $B_{1}$-modules. That is, an arrow that goes
down and left represents multiplication by $u_{\alpha_1}$, while 
an arrow going down and right is multiplication by $u_{\alpha_2}$.

\begin{figure}\label{Omega2diagram} 
\begin{center}
\setlength{\unitlength}{.5cm}
\begin{picture}(20,8)
\put(8,3.5){($-2\alpha_{1}-2\alpha_{2}$)}
\put(4,0.0){($-3\alpha_{1}-2\alpha_{2}$)}
\put(12,0.0){($-2\alpha_{1}-3\alpha_{2}$)}
\put(2,2.0){($-3\alpha_{1}-\alpha_{2}$)}
\put(14,2.0){($-\alpha_{1}-3\alpha_{2}$)}
\put(0,4.0){($-2\alpha_{1}-\alpha_{2}$)}
\put(16,4.0){($-\alpha_{1}-2\alpha_{2}$)}
\put(-0.5,6.0){($-2\alpha_{1}$)}
\put(18.2,6.0){($-2\alpha_{2}$)}
\put(1.0,5.7){\vector(1,-1){0.8}}
\put(2.8,3.7){\vector(1,-1){0.8}}
\put(4.6,1.6){\vector(1,-1){0.8}}
\put(9.5,3.0){\vector(-1,-1){2.0}}
\put(10.5,3.0){\vector(1,-1){2.0}}
\put(19,5.7){\vector(-1,-1){0.8}}
\put(17,3.7){\vector(-1,-1){0.8}}
\put(15.0,1.6){\vector(-1,-1){0.8}}
\end{picture}
\caption{}
\end{center}
\end{figure}

By analyzing the structure of $\Omega^{2}(k)$ we can conclude that 
$$\Omega^{2}(k)/\text{Rad}_{B_{1}}\Omega^{2}(k)\cong {\mathfrak u}^{(1)}$$
and 
$$\text{Rad}_{B_{1}}\Omega^{2}(k)\cong N_{1}\oplus N_{2}$$ 
as $B$-modules. 

The module $\Omega^{2}(k)$ is indecomposable over $B_{1}$. Therefore, if $\Omega^{2}(k)$ has a $B$-structure then it 
is indecomposable over $B$ and represents a non-trivial extension class in 
$$\text{Ext}^{1}_{B}(\Omega^{2}(k)/\text{Rad}_{B_{1}}\Omega^{2}(k), \text{Rad}_{B_{1}}\Omega^{2}(k)).$$
This implies that $\text{Ext}^{1}_{B}({\mathfrak u}^{(1)}, N_{j})\neq 0$ for $j=1$ or $2$. 

Our task is to show by a cohomological calculation that 
$\text{Ext}^{1}_{B}({\mathfrak u}^{(1)}, N_{j})=0$ for $j=1$ and $2$. 
This provides a contradiction to the assumption 
that $\Omega^{2}(k)$ has a compatible $B$-structure. By symmetry we can 
simply look at the case that $j=1$. Apply the LHS spectral sequence 
\begin{equation}\label{E1}
E_{2}^{i,j}=
\text{Ext}^{i}_{B/B_{1}}(k,\text{Ext}^{j}_{B_{1}}({\mathfrak u}^{(1)},N_{1})
\Rightarrow \text{Ext}^{i+j}_{B}({\mathfrak u}^{(1)},N_{1}).
\end{equation}
Note that $\text{Hom}_{B_{1}}(k,N_{1})=0$ so the five term exact sequence (associated to this spectral sequence) yields: 
\begin{equation}\label{E1-term}
E_{1}=\text{Ext}^{1}_{B}({\mathfrak u}^{(1)},N_{1})\cong \text{Hom}_{B/B_{1}}({\mathfrak u}^{(1)},\text{Ext}_{B_{1}}^{1}(k,N_{1})).
\end{equation}

We can utilize the techniques in  \cite[Lemma 3.1.1, Theorem 3.2.1]{UGA} to compute the $B/B_{1}$-socle of $\text{Ext}_{B_{1}}^{1}(k,N_{1})$. Observe 
that as a $B/B_{1}$-module: 
\begin{eqnarray*} 
\text{Ext}_{B_{1}}^{1}(k,N_{1})&\cong& \text{Ext}^{1}_{B_{1}}(L(\omega_{1})^{*},-2\alpha_{1}-\alpha_{2}-\omega_{1})\\
&\cong& \text{Ext}^{1}_{B_{1}}(L(\omega_{2}),-2\alpha_{1}-\alpha_{2}-\omega_{1}).
\end{eqnarray*} 
Let $-p\nu$ be a simple module in the socle where $\nu\in X$. Recall that $X$ is the set of weights and $X_{+}$ is the set of dominant weights. 
Then 
\begin{eqnarray*}
\text{Hom}_{B/B_{1}}(-p\nu,\text{Ext}^{1}_{B_{1}}(L(\lambda),\mu))
&\cong& \text{Hom}_{B/B_{1}}(k,\text{Ext}^{1}_{B_{1}}(L(\lambda),\mu)\otimes p\nu)\\
&\cong&  \text{Hom}_{B/B_{1}}(k,\text{Ext}^{1}_{B_{1}}(L(\lambda),\mu + p\nu). 
\end{eqnarray*} 

Set $\lambda=\omega_{2}$ and $\mu=2\alpha_{1}-\alpha_{2}-\omega_{1}$. Consider the LHS spectral sequence 
$$E_{2}^{i,j}=\text{Ext}^{i}_{B/B_{1}}(k,\text{Ext}^{j}_{B_{1}}(L(\lambda),\mu+p\nu))\Rightarrow \text{Ext}^{i+j}_{B}(L(\lambda),\mu+p\nu).$$ 
So $\text{Hom}_{B_{1}}(L(\lambda),\mu+p\nu)=0$, 
because $\lambda-\mu \notin pX$. The associated 
five term exact sequence yields an isomorphism given as  
$$
E_{2}^{0,1}= 
\text{Hom}_{B/B_{1}}(k,\text{Ext}^{1}_{B_{1}}(L(\lambda),\mu + p\nu))
\cong \text{Ext}^{1}_{B}(L(\lambda),\mu+p\nu).
$$ 

There exists another spectral sequence 
$$E_{2}^{i,j}=\text{Ext}^{i}_{G}(L(\lambda),R^{j}\text{ind}_{B}^{G}(\mu+p\nu))\Rightarrow \text{Ext}^{i+j}_{B}(L(\lambda),\mu+p\nu).$$
We have two cases to consider. Suppose $\mu+p\nu\in X_{+}$.  
Then by Kempf's vanishing theorem,
this spectral sequence collapses and 
we have that 
$$\text{Ext}^{1}_{B}(L(\lambda),\mu+p\nu)\cong \text{Ext}^{1}_{G}(L(\omega_{2}),H^{0}(\mu+p\nu))=
\text{Ext}^{1}_{G}(V(\omega_{2}),H^{0}(\mu+p\nu))=0.$$  
On the other hand, if $\mu+p\nu\notin X_{+}$, 
then the five term exact sequence yields 
$$
\text{Ext}^{1}_{B}(L(\lambda),\mu+p\nu)\cong 
\text{Hom}_{G}(L(\omega_{2}),R^{1}\text{ind}_{B}^{G}(\mu+p\nu)).
$$ 
By results of Andersen \cite[Proposition 2.3]{And}, we have that
$\mu+p\nu=s_{\alpha}\cdot \omega_{2}$ where $\alpha\in \Delta$, and 
$s_{\alpha}$ is a simple reflection. A direct computation shows that 
$$\mu-s_{\alpha_{1}}\cdot \omega_{2}=-2(\alpha_{1}+\alpha_{2})$$
and 
$$\mu-s_{\alpha_{2}}\cdot \omega_{2}=-3\alpha_{1}.$$ 
The second condition can not be satisfied because 
$-3\alpha_{1}\notin pX$. Therefore, the $B/B_{1}$ socle of 
$\text{Ext}_{B_{1}}^{1}(k,N_{1})$ is one-dimensional 
and is equal to $-2(\alpha_{1}+\alpha_{2})$. 

In addition, $\text{Ext}^{1}_{B_{1}}(k,N_{1})$ is a subquotient of $\text{Hom}_{B_{1}}(P(k),N_{1})$ (where $P(k)$ is the projective cover of 
$k$ as $B_{1}$-module), and it has dimension at most two. 
Furthermore, the $T$-weights of ${\mathfrak u}^{(1)}$ are distinct and 
the $B/B_{1}$-socle of ${\mathfrak u}^{(1)}$ is $-2(\alpha_{1}+\alpha_{2})$, so the image of any any non-zero map in 
$\text{Hom}_{B/B_{1}}({\mathfrak u}^{(1)},\text{Ext}_{B_{1}}^{1}(k,N_{1}))$ is three-dimensional. We can 
now conclude that $\text{Hom}_{B/B_{1}}({\mathfrak u}^{(1)},\text{Ext}_{B_{1}}^{1}(k,N_{1}))=0$, and by (\ref{E1-term}), $E_{1}=0$. 
\end{proof}

\vskip.3in


\begin{thebibliography}{BeCaRo}

\bibitem{And} H.H.  Andersen, Extensions of modules for algebraic groups, American J. Math., {\sf 106} (1984), 489--504. 

\bibitem{CHM} J.F. Carlson, D.J. Hemmer, N. Mazza, The group of endotrivial modules 
for the symmetric and alternating groups, {\em Proc. Edinburgh Math. Soc.},  {\sf  53},  (2010), 83--95.  

\bibitem{CN} J.F. Carlson, D.K. Nakano, Endotrivial modules for finite group schemes, to appear in J. Reine Angew. Math. 

\bibitem{CMN1} J.F. Carlson, N. Mazza, D.K. Nakano, Endotrivial groups for finite Chevalley groups, {\em J. Reine Angew. Math.}, {\sf 595}, 
(2006), 93--120.  

\bibitem{CMN2} J.F. Carlson, N. Mazza, D.K. Nakano, Endotrivial groups for the symmetric and alternating groups, {\em Proc. Edinburgh Math. Soc.}, 
{\sf 51}, (2008), 1--22. 

\bibitem{CNP} J.F. Carlson, D.K. Nakano, K.M. Peters, On the vanishing of extensions of modules over reduced enveloping algebras, 
Math. Ann., {\sf 302} (1995), 541--560. 

\bibitem{CT}  J. F. Carlson and J. Th\'evenaz, {\it The classification
of endotrivial modules}, Invent. Math.,  {\sf 158} (2004), 389--411

\bibitem{D} E. Dade, {\em Algebraically rigid modules}, Representation Theory II, Proc. Second International Conference, 
Carlton Univ., Springer Lecture Notes in Math. 832,  (1979), 195--215.  



\bibitem{Jan} J. Jantzen, {\em Representations of  
algebraic groups}, Second Edition, American Mathematical Society, 
Providence R.I., 2003. 

\bibitem{PS} B.J. Parshall, L.L. Scott, Variations on a theme of Cline and Donkin, arXiv:1003.3897v2

\bibitem{Pu} L. Puig, Affirmative answer to a question of Feit, {\em J. Algebra}, 
  {\sf 131} (1990), 513--526.
        
\bibitem{UGA} University of Georgia VIGRE Algebra Group, First cohomology for finite groups of Lie type: simple modules with small dominant weights, 
arXiv:1010.1203. 


        
\end{thebibliography}
\end{document}